\newtheorem{theorem}{Theorem}
\newtheorem{lemma}{Lemma}
\newtheorem{proposition}{Proposition}
\newtheorem{remark}{Remark}
\newtheorem{definition}{Definition}
\theoremstyle{definition}
\newcommand{\dist}{\operatorname{dist}}
\newcommand{\supp}{\operatorname{supp}}
\newcommand{\e}{\varepsilon}
\newcommand{\R}{\mathbb{R}}
\newcommand{\Z}{\mathbb{Z}}
\newcommand{\N}{\mathbb{N}}
\newcommand{\diam}{\mathrm{diam}\,}
\renewcommand{\d}{\mathrm{d}}
\renewcommand{\div}{\mathrm{div}\,}
\renewcommand{\L}{\mathbb{L}}
\newcommand{\id}{\mathrm{Id}}
\newcommand{\bd}{\mathrm{dim}_{\mathrm{box}}}
\newcommand\sgn{\mathrm{sgn}}
\renewcommand{\H}{\mathcal{H}}
\renewcommand{\L}{{\mathcal L}}
\newcommand\wto{\rightharpoonup}
\newcommand{\Pf}{\mathrm{Pf}(\Omega)}
\newcommand{\Sym}{\mathrm{Sym}}
\newcommand{\cof}{\mathrm{cof}\,}
\title{Extrinsic curvature of codimension one isometric immersions with H\"older continuous derivatives}
\date{\today}
\author[S. Behr] {S{\"o}ren Behr}
\address[S{\"o}ren Behr]{Hausdorff Center for Mathematics \& Institute for Applied Mathematics, Bonn, Germany}
\email{s6sobehr@uni-bonn.de}
\author[H. Olbermann] {Heiner Olbermann}
\address[Heiner Olbermann]{Hausdorff Center for Mathematics \& Institute for Applied Mathematics, Bonn, Germany}
\email{heiner.olbermann@hcm.uni-bonn.de}
\begin{document}
\maketitle
\begin{abstract}
We prove that if $n$ is even, $(M,g)$ is a compact $n$-dimensional Riemannian manifold
whose Pfaffian form is a positive multiple of the volume form, and $y\in
C^{1,\alpha}(M;\R^{n+1})$ is an  isometric immersion with $n/(n+1)< \alpha\leq
1$, then $y(M)$ is a surface of bounded extrinsic curvature. This is proved by
showing that extrinsic curvature, defined by a suitable pull-back of the volume
form on the $n$-sphere via the Gauss map, is identical to intrinsic curvature,
defined by the Pfaffian form. This latter fact is stated in form of an integral
identity for the Brouwer degree of the Gauss map, that is classical for $C^2$
functions, but new for $n>2$ in the present context of low regularity.
\end{abstract}

\section{Introduction}

\subsection{Statement of results}
Let $M$ be a compact $n$-dimensional Riemannian manifold, where $n$ is even. 
We adapt Cartan's method of moving frames. Let $X_i$, $i=1,\dots,n$ be an
orthonormal frame on $M$, and let $\theta^i$, $\omega_i^j$, $\Omega_i^j$, $i,j=1,\dots,n$ be
the associated dual forms, connection one-forms and curvature two-forms respectively,
defined by the equations

\begin{equation}
\begin{split}
  \theta^i(X_j)=&\delta^i_j\\
  \d\theta^i=&\sum_{i=1}^n \omega^i_j\wedge\theta^j\\
  \Omega^i_j=&\d\omega^i_j+\sum_{k=1}^n\omega^i_k\wedge\omega_j^k\,,
\end{split}\label{eq:10}
\end{equation}
where $\delta^i_j$ denotes the Kronecker delta, and $i,j\in \{1,\dots,n\}$.
We define the Pfaffian of $(M,g)$ by
\[
  \mathrm{Pf}(\Omega)=\frac{1}{n (n/2)!}\sum_{\zeta\in \Sym(n)}\Omega_{\zeta(1)}^{\zeta(2)}\wedge\dots\wedge \Omega_{\zeta(n-1)}^{\zeta(n)}\,,
\]
where $\Sym(n)$ denotes the group of permutations of $\{1,\dots,n\}$.
It turns out (see \cite{MR532834}) that this formula is independent of the
chosen orthonormal frame, and thus makes $\mathrm{Pf}(\Omega)$ defined on all of
$M$. Additionally, for every isometric immersion $y\in C^2(M;\R^{n+1})$
with normal $\nu:M\to S^n$, we have by Gauss' equation
\begin{equation}
\mathrm{Pf}(\Omega)= \nu^*\sigma_{S^n}\,,\label{eq:7}
\end{equation}
where $\sigma_{S^n}$ denotes the canonical volume element on $S^n$, and $\nu^*$
the pull-back by $\nu$. (Such a
relation  only exists for even $n$, which is the reason why our analysis is limited to this
case.) 
Let us consider the isometric immersion $y$, its normal $\nu$ and the Pfaffian
$\Pf$  in a chart
$U\subset \R^n$. In this chart, the metric $g$ is given by the $n\times n$
matrix-valued function $Dy^TDy$.
As a direct consequence of \eqref{eq:7}, we have the change of variables type
formula
\begin{equation}
\int_{U}\varphi\circ \nu
\Pf=\int_{S^{n}}\varphi(z)\deg(\nu,U,z)\d\H^n(z)\quad\text{ for every
}\varphi\in L^\infty(S^n\setminus \nu(\partial U))\,,\label{eq:8}
\end{equation}
where $\deg(\nu,U,\cdot)$ denotes the Brouwer degree of $\nu:U\to S^n$.
Our first result is the validity of this formula for every isometric immersion  $y\in
C^{1,\alpha}(M;\R^{n+1})$ with $\alpha>n/(n+1)$: 
\begin{theorem}
\label{thm:cov} 
 Let $n$ be even,  $U\subset \R^n$ open and bounded, and $n/(n+1)<\alpha\leq
 1$. Furthermore, let $y\in C^{1,\alpha}(U;\R^{n+1})$ with $g=Dy^TDy\in
 C^\infty(U;\Sym_n^+)$, and let $\Omega$ be the curvature form associated to the
 metric $g$. Then
\[
\int_{U}\varphi\circ \nu
\Pf=\int_{S^{n}}\varphi(z)\deg(\nu,U,z)\d\H^n(z)\quad \text{ for every
}\varphi\in L^\infty(S^n\setminus \nu(\partial U))\,.
\]  
\end{theorem}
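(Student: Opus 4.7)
I would proceed by mollification. Put $y_\e = y * \rho_\e$ for a standard mollifier $\rho_\e$; then $y_\e \in C^\infty$ is an immersion for $\e$ small (on compact subsets of $U$), and the Gauss maps $\nu_\e$ converge uniformly to $\nu$ on compacts. Writing $g_\e := Dy_\e^T Dy_\e$ for the smooth metric induced by $y_\e$ and $\Omega_\e$ for its curvature form, Gauss' equation applied to the smooth immersion $y_\e$ gives the pointwise identity $\mathrm{Pf}(\Omega_\e) = \nu_\e^* \sigma_{S^n}$. Combined with the classical area formula, this yields the conclusion of the theorem with $y$ replaced by $y_\e$:
\[
\int_U (\varphi \circ \nu_\e)\, \mathrm{Pf}(\Omega_\e) \;=\; \int_U (\varphi \circ \nu_\e)\, \nu_\e^* \sigma_{S^n} \;=\; \int_{S^n} \varphi(z) \deg(\nu_\e, U, z)\, \d\H^n(z).
\]
After reducing by density to $\varphi$ compactly supported in $S^n \setminus \nu(\partial U)$, the right-hand side converges to $\int_{S^n} \varphi \deg(\nu, U, \cdot)\, \d\H^n$ by the uniform convergence $\nu_\e \to \nu$ and continuity of the Brouwer degree.

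The central task is then to show the leftmost integral converges to $\int_U (\varphi \circ \nu)\, \mathrm{Pf}(\Omega)$, where $\mathrm{Pf}(\Omega)$ is the smooth Pfaffian of the given metric $g$. The obstruction is that $g_\e \neq g$: the Constantin--E--Titi commutator estimate gives $\|g_\e - (g)_\e\|_{C^0} = O(\e^{2\alpha})$ (with $(g)_\e := g*\rho_\e$), and since $g$ is smooth, $(g)_\e \to g$ in every $C^k$, so $g_\e \to g$ in $C^0$ at rate $\e^{2\alpha}$; however the $C^2$-distance $\|g_\e - g\|_{C^2}$ blows up like $\e^{2\alpha - 2}$ for $\alpha<1$. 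Hence $\mathrm{Pf}(\Omega_\e)$ does not converge to $\mathrm{Pf}(\Omega)$ pointwise, only in a distributional sense that must be controlled.

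I would establish the required distributional convergence via the Chern--Weil transgression. Writing $\omega$ and $\omega_\e$ for the Levi-Civita connection $1$-forms of $g$ and $g_\e$ relative to a fixed smooth local frame, the interpolating connections $\omega_t = (1-t)\omega + t\omega_\e$ have curvatures $\Omega_t$, and one has the identity $\mathrm{Pf}(\Omega_\e) - \mathrm{Pf}(\Omega) = \d T_\e$ for an explicit $(n-1)$-form $T_\e$ polynomial in $\omega_\e - \omega$ and in the $\Omega_t$. For $\varphi$ smooth and compactly supported in $S^n \setminus \nu(\partial U)$ (the general $L^\infty$ case following by approximation), the composition $\varphi \circ \nu_\e$ vanishes near $\partial U$, and integration by parts reduces the task to showing
\[
\int_U \d(\varphi \circ \nu_\e) \wedge T_\e \;\longrightarrow\; 0 \qquad (\e \to 0).
\]

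The main analytic obstacle is this final convergence, and it is precisely where the threshold $\alpha > n/(n+1)$ enters. The $1$-form $\d(\varphi \circ \nu_\e)$ carries a factor $D\nu_\e$ of size $O(\e^{\alpha-1})$, and $T_\e$ is polynomial of derivative-order $n-1$ in objects that can be similarly singular; the saving is that the problematic terms in $T_\e$ have commutator structure, gaining an extra $\e^{\alpha}$ per commutator, and a careful scaling of $(n+1)$ H\"older factors of exponent $\alpha$ against $n$ derivatives produces a net size $O(\e^{(n+1)\alpha - n})$, which vanishes precisely under $\alpha > n/(n+1)$. This bookkeeping, which combines the polynomial algebra of Chern--Weil transgression with commutator estimates of Constantin--E--Titi type, is of the same flavour as the distributional-Jacobian / compensated-compactness arguments for H\"older maps into spheres developed by Brezis--Nguyen and in the second author's earlier work on bounded extrinsic curvature.
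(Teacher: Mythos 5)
Your plan is viable and genuinely different from the paper's proof, and the power counting at its core is correct: with the commutator estimate of Lemma \ref{lem:quadratic} one has, on a compact set containing $\supp(\varphi\circ\nu_\e)$ for small $\e$, $\|D\nu_\e\|_{C^0}=O(\e^{\alpha-1})$, $\|\omega_\e-\omega\|_{C^0}\lesssim\|g_\e-g\|_{C^1}=O(\e^{2\alpha-1})$ and $\|\Omega_t\|_{C^0}=O(\e^{2\alpha-2})$, so the error term is $O\big(\e^{(\alpha-1)+(2\alpha-1)+(n/2-1)(2\alpha-2)}\big)=O(\e^{(n+1)\alpha-n})$, vanishing exactly for $\alpha>n/(n+1)$ --- the same arithmetic that in the paper appears as the compatibility of $\theta<n\alpha-(n-1)$ (Remark \ref{rem:Pirem}) with $\theta>d-(n-1)$, $d\le n-\alpha$. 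The paper also mollifies $y$ and uses $\omega_\e\to\omega$ in $C^{0,\beta}$, $\beta<2\alpha-1$, but instead of transgressing between the two connections and integrating by parts against $\d(\varphi\circ\nu_\e)$, it works with the Gauss--Bonnet--Chern primitive $\Pi(\omega)$, proves continuity of $\omega\mapsto\Pi(\omega)$ into the interpolation space $X_\theta$, defines $\int_A\d\Pi(\omega)$ over sets $A$ with fractal boundary via Whitney decompositions, and handles a general $\varphi$ by slicing into the superlevel sets $A_r=\{\varphi\circ\nu>r\}$ (Lemma \ref{lem:hoelderpreimboxdim}) together with the weak $L^p$ convergence of the degree (Proposition \ref{prop:wcdeg}). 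Your integration by parts makes the fractal-boundary and level-set machinery unnecessary and is in this sense more elementary; what the paper's heavier route buys is Proposition \ref{prop:notest} on fractal-boundary sets and $L^p$ bounds on $\deg(\nu,U,\cdot)$, which are reused in Theorem \ref{thm:boundcurv} and are of independent interest.

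Two points in your plan need real work. First, the transgression identity as written is not literally correct: $\mathrm{Pf}$ is invariant only under $SO(n)$, so in a fixed frame that is not orthonormal for both metrics the Levi-Civita forms of $g$ and $g_\e$ are not both $\mathfrak{so}(n)$-valued, and the Pfaffian of the curvature matrix in such a frame is not the Euler form. You must either identify the two orthonormal frame bundles (e.g.\ by Gram--Schmidt, as in Section \ref{sec:appr-c1-alpha}; the identification is $\id+O(\e^{2\alpha})$ in $C^0$ and $O(\e^{2\alpha-1})$ in $C^1$, so the counting survives) or transgress along a path of metrics using the metric-dependent Euler form; this is standard but must be carried out, and it plays the role that the explicit polynomial $\Pi(\omega)$ and Lemma \ref{lem:contlem} play in the paper. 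Second, the reduction ``by density to $\varphi$ compactly supported in $S^n\setminus\nu(\partial U)$'' presupposes that $\varphi\,\deg(\nu,U,\cdot)$ is integrable, which is not automatic for a merely continuous $\nu$. In your scheme it can be recovered a posteriori: the identity for compactly supported test functions gives $\big|\int_{S^n}\varphi\deg(\nu,U,z)\d\H^n(z)\big|\le\|\varphi\|_{L^\infty}\int_U|\mathrm{Pf}(\Omega)|$, and since $\deg(\nu,U,\cdot)$ is locally constant on $S^n\setminus\nu(\partial U)$ this yields $\deg(\nu,U,\cdot)\in L^1(S^n\setminus\nu(\partial U))$, after which dominated convergence concludes; but this step, and the a.e.\ convergence of $\varphi_k\circ\nu$ needed on the left-hand side, should be written out (the paper obtains integrability from its $L^p$ degree estimates instead).
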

Now let us consider  Riemannian manifolds $(M,g)$ whose Pfaffian is a positive multiple of the
volume form. Then for any smooth immersion $y$, the index of the normal map
$\nu$ is positive everywhere. Hence, the Brouwer degree $\deg(\nu,M,\cdot)$ is
positive everywhere on $\nu(M)\setminus\nu(\partial M)$, and one can estimate
the $n$-dimensional Hausdorff measure of images of the normal map by the formula
\eqref{eq:8}. This allows for estimates on \emph{extrinsic curvature}, that we
define following Pogorelov \cite{MR0346714}:

\begin{definition}
Let $y:M\to \R^{n+1}$ be an immersed manifold of class $C^1$.  Denote
the surface normal by $\nu:M\to S^n$. The \emph{extrinsic curvature} of $M$ is
given by
\[
\begin{split}
\sup\Big\{&\sum_{i=1}^N\H^n(\nu(E_i)):\,N\in\N,\,\{E_i\}_{i=1,\dots,N}\\
&  \text{ a  collection of closed disjoint  subsets of
}M\Big\}\in[0,\infty].
\end{split}
\]
If this quantity is finite, we say $y(M)$ is of  bounded extrinsic curvature.
\end{definition}
Here, $\H^n$ denotes $n$-dimensional Hausdorff measure.
Using Theorem \ref{thm:cov}, we can show that the  reasoning above still
applies for isometric immersions $y\in C^{1,\alpha}(U;\R^{n+1})$ if $\alpha>n/(n+1)$:
\begin{theorem}
\label{thm:boundcurv}
  Let $n$ be even, and let $(M,g)$ be a precompact $n$-dimensional Riemannian manifold
  with smooth metric and positive Pfaffian. Furthermore, let $n/(n+1)<\alpha\leq
  1$, and let $y\in C^{1,\alpha}(M;\R^{n+1})$ be an isometric immersion. Then the
  immersed surface $y(M)$ has bounded extrinsic curvature.
\end{theorem}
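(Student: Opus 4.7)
The plan is to cover $M$ by a finite atlas of charts on which Theorem~\ref{thm:cov} applies, reduce the extrinsic curvature to an integrated multiplicity bound for the Gauss map $\nu$, and then control that multiplicity by the Brouwer degree of $\nu$ via the positivity of $\Pf$.

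First, since $M$ is precompact, I would choose finitely many open charts $\bar U_k\Subset V_k$ ($k=1,\dots,K$) covering $M$, with each $V_k$ identified with a bounded open subset of $\R^n$ on which Theorem~\ref{thm:cov} applies; by a generic choice of the $V_k$ I also arrange $\H^n(\nu(\partial V_k))=0$. Given pairwise disjoint closed sets $E_1,\dots,E_N\subset M$, setting $F_{i,k}:=E_i\cap\bar U_k$ gives, for each fixed $k$, a family of disjoint closed subsets of $\bar U_k$, and subadditivity of $\H^n$ yields
\[
\sum_{i=1}^N\H^n(\nu(E_i))\leq\sum_{k=1}^K\sum_{i=1}^N\H^n(\nu(F_{i,k})),
\]
so it suffices to bound the inner sum for each $k$ uniformly in $N$ and in the $F_{i,k}$.

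For fixed $k$, the disjointness of the $F_{i,k}$ forces any $z\in S^n$ that lies in $\nu(F_{i,k})$ for $s$ distinct indices $i$ to have at least $s$ preimages in $\bar U_k$, hence
\[
\sum_{i=1}^N\H^n(\nu(F_{i,k}))\leq\int_{S^n}\#(\nu^{-1}(z)\cap\bar U_k)\,\d\H^n(z).
\]
Theorem~\ref{thm:cov} applied with $\varphi=\chi_A$ for Borel $A\subset S^n\setminus\nu(\partial V_k)$ identifies the pushforward of the positive measure $\Pf$ (restricted to $V_k$) by $\nu$ with $\deg(\nu,V_k,z)\,\d\H^n(z)$ on $S^n\setminus\nu(\partial V_k)$; positivity of $\Pf$ thus forces $\deg(\nu,V_k,\cdot)\geq 0$ almost everywhere.

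The central step is then the pointwise bound $\#(\nu^{-1}(z)\cap\bar U_k)\leq\deg(\nu,V_k,z)$ for $\H^n$-a.e.\ $z$, i.e.\ each preimage contributes local index at least one. I would establish this by localization: for each $x\in\nu^{-1}(z)\cap\bar U_k$ pick a small ball $B(x,r)\subset V_k$ with $z\notin\nu(\partial B(x,r))$, apply Theorem~\ref{thm:cov} on $B(x,r)$ to obtain $\int_{B(x,r)}\Pf=\int_{S^n}\deg(\nu,B(x,r),\cdot)\,\d\H^n>0$, and combine non-negativity of this local degree with the fact that $\nu(B(x,r))$ is concentrated near $\nu(x)=z$ to conclude $\deg(\nu,B(x,r),z)\geq 1$; the excision property of the Brouwer degree then sums these contributions inside $V_k$. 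With this in hand, Theorem~\ref{thm:cov} applied with $\varphi\equiv 1$ gives
\[
\sum_{i=1}^N\H^n(\nu(F_{i,k}))\leq\int_{S^n}\deg(\nu,V_k,z)\,\d\H^n(z)=\int_{V_k}\Pf<\infty,
\]
and summing over the finitely many $k$ produces a bound on $\sum_i\H^n(\nu(E_i))$ depending only on $M$ and $\Pf$, proving that $y(M)$ has bounded extrinsic curvature. The principal obstacle is precisely this last pointwise degree bound: it is immediate in $C^2$ from the positive Jacobian of $\nu$, but in the H\"older-regular setting it requires the above careful localization together with control of the null sets $\nu(\partial V_k)$ and $\nu(\partial B(x,r))$, where the low regularity of $\nu$ makes the standard excision and local-constancy arguments for the Brouwer degree delicate.
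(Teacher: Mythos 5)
Your overall architecture differs from the paper's in a way that creates a real gap at exactly the step you flag as central. Because you fix the chart cover $\bar U_k\subset\subset V_k$ before seeing the sets $E_i$, you are forced to prove the pointwise multiplicity bound $\#(\nu^{-1}(z)\cap\bar U_k)\le\deg(\nu,V_k,z)$ for a.e.\ $z$, and your localization does not deliver it. For a preimage $x$ of $z$ you need a radius $r$ with $z\notin\nu(\partial B(x,r))$, i.e.\ a sphere around $x$ avoiding the fiber $\nu^{-1}(z)$; but at this regularity nothing established so far prevents $\nu^{-1}(z)$ from containing a continuum through $x$ (the fiber is only known to be closed, and of measure zero for a.e.\ $z$), in which case every sphere $\partial B(x,r)$ meets $\nu^{-1}(z)$, the local degree $\deg(\nu,B(x,r),z)$ is undefined, and excision cannot be applied. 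Distinct preimages need not be isolated, so ``each preimage contributes local index at least one'' is not even meaningful a priori, and the multiplicity function could be infinite; the bound by the degree is exactly the kind of statement that is immediate for $C^2$ immersions with positive curvature but is unproven here (and would at best have to be reformulated in terms of components of the fiber). Separately, where the local degree does make sense, the inference ``$\nu(B)$ concentrated near $z$ implies $\deg(\nu,B,z)\ge 1$'' should be replaced by testing the identity of Theorem \ref{thm:cov} on $B$ with the indicator of a small disk around $z$ at positive distance from $\nu(\partial B)$; positivity of $\Pf$ near $x$ then gives the strict inequality.

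The paper avoids multiplicity altogether: given disjoint closed sets $F_1,\dots,F_r$, it chooses pairwise disjoint open neighborhoods $V_i$ with smooth boundary, adapted to the $F_i$ (in your chart-local version this is available too, since the sets $E_i\cap\bar U_k$ are disjoint compact sets, hence at positive mutual distance), and proves only the indicator bound $\deg(\nu,V_i,\cdot)\ge\chi_{\nu(V_i)\setminus\nu(\partial V_i)}$ --- for which your positivity-of-the-pushforward observation essentially suffices. Then $\sum_i\H^n(\nu(F_i))\le\sum_i\int_{S^n}\deg(\nu,V_i,z)\,\d\H^n(z)=\sum_i\int_{V_i}\Pf\le\int_M\Pf$, where the disjointness of the $V_i$ does the work your multiplicity count was meant to do. If you adapt your separating open sets to the given $E_i$ in this way, your argument closes and coincides with the paper's; as written, the a.e.\ bound $\#(\nu^{-1}(z)\cap\bar U_k)\le\deg(\nu,V_k,z)$ is the genuine gap.
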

\subsection{Scientific context}
The Weyl problem is the task of finding an isometric immersion $y\in C^2(S^2; \R^3)$
for a manifold $(S^2,g)$ with positive Gauss curvature (where for simplicity, we assume $g\in C^\infty$). Existence of such an immersion has been proved
independently by Pogorelov and Nirenberg. If a solution of this problem is
unique  up to rigid motions, then it is called \emph{rigid}. The proof of
rigidity for the case of analytic immersions is due to Cohn-Vossen \cite{chon1927zwei}, and for
$C^2$ immersions, it has been given by Pogorelov.\\
\\
The regularity assumption in the Weyl problem is crucial for uniqueness
questions. Recall that an immersion $y\in (S^2;\R^3)$ is called
\emph{short} if every (Lipschitz) path $\gamma\subset M$ gets mapped to a
shorter path $y(\gamma)\subset\R^3$. The famous Nash-Kuiper Theorem
\cite{MR0065993,MR0075640} states that any short immersion can be
approximated in $C^0$ by isometric immersions of regularity $C^1$. (To avoid
confusion, we remark
that the Nash-Kuiper Theorem is not limited to $M=S^2$ and immersions
$M\to\R^3$, but it holds for any short immersion of codimension at
least one.)  Hence, there
exists a vast set of solutions to the Weyl problem in the class of $C^1$
immersions. Historically, this was the first instance of the so-called
``$h$-principle'', nowadays associated with Gromov \cite{MR864505}.\\
\\
Comparing these two cases, it immediately arises the question of what can be said about the intermediate
ones: For which range of $\alpha\in(0,1)$ are isometric immersions $y\in
C^{1,\alpha}(S^2;\R^3)$ rigid? For which range does the $h$-principle hold?   
This is a matter of some interest --  the question of existence of a critical $\alpha$  can be
found as problem 27 in  Yau's list of open problems in geometry
\cite{MR1754714}. For large codimension, a solution has been given by
K\"all\'en in \cite{MR499136}. In this case, the $h$-principle holds for the
whole range $0<\alpha<1$.\\
\\
In codimension one, a partial answer can be found in a series of articles by Borisov 
\cite{MR0104277,MR0104278,MR0116295,MR0104279,MR0131225,MR0192449,MR2047871}. He
proved  that the $h$-principle holds locally for $\alpha<\frac{1}{n^2+n+1}$, where $n$
is the dimension of the manifold (provided the metric
$g$ is analytic), while for $\alpha>\frac23$, $C^{1,\alpha}$-isometric immersions of manifolds $(S^2,g)$
with positive Gauss curvature are rigid.\\
\\
In \cite{conti2012h}, Conti, De Lellis and Sz\'ekelyhidi have given  simplified versions of Borisov's proofs of these
facts. For the case $\alpha<\frac{1}{n^2+n+1}$, it has been shown there
that the $h$-principle also holds for non-analytic $g$. In the recent paper
\cite{2015arXiv151001934D}, it has been proved that in dimension $n=2$, it holds
in the (larger) range $\alpha>\frac15$.
Concerning rigidity, \cite{conti2012h} contains the  statements of our Theorems
\ref{thm:cov} and \ref{thm:boundcurv} for the case $n=2$. Combining the latter with classical
results by Pogorelov on surfaces of bounded extrinsic curvature
\cite{MR0346714}, the rigidity result for isometric immersions $y\in
C^{1,\alpha}(S^2;\R^3)$ with $\alpha>\frac23$ follows (where, of course, Gauss
curvature is assumed to be positive).\\
\\
Our Theorems \ref{thm:cov} and \ref{thm:boundcurv} generalize the results on
extrinsic curvature from
\cite{conti2012h} to even dimension $n>2$. However, the results by
Pogorelov from \cite{MR0346714} that allow to conclude that isometric immersions of bounded extrinsic
curvature  are rigid  have only been proved for the case $n=2$. The question
whether or not their analogues  in higher dimension are valid will not be addressed
here.  Thus, the question whether codimension one $C^{1,\alpha}$-isometric
embeddings of $n$-dimensional manifolds whose Pfaffian is positive for
$\alpha>n/(n+1)$ and even $n$ are rigid, remains open too. \\
\\
As in two dimensions, we require the Pfaffian form to be a positive multiple of
the volume form. In other words, we require the
Gauss-Bonnet integrand
(also known as the Lipschitz-Killing curvature, or Gauss-Kronecker curvature) to
be positive. In passing, we mention that it is a known fact that for  smooth immersions,
positivity of the Gauss-Bonnet integrand implies that the immersed surface is
the boundary of a convex body also in even dimensions $n>2$, see
\cite{MR1871243}.\\
\\
The present paper builds on the recent results by the second author from
\cite{olbermann2015integrability}. The main
focus of that work are the integrability properties of the Brouwer degree. The results
there are achieved by a suitable definition of the distributional Jacobian $\det Du$ for
$u\in C^{0,\alpha}(U;\R^n)$ through real interpolation. In a similar way, the
distributional Jacobian  had been defined by Brezis and Nguyen in
\cite{MR2810795}, building on an idea by Bourgain, Brezis and Mironescu
\cite{MR1781527,MR2075883}.  Here, we will adapt these techniques to the
Pfaffian form. 
Another
ingredient that has been used in \cite{olbermann2015integrability} and will also be
used here, is a well defined notion of integration of H\"older continuous forms
over fractals. 
This follows closely the definitions from the paper
\cite{MR1119189} by Harrison and Norton.
The main ingredients of our proofs will be suitable generalizations of the results from
\cite{olbermann2015integrability}; for the convenience of the reader, we will
give full proofs of these statements, and not refer to that work. 

\subsection{Plan of the paper}
In Section \ref{sec:results-from-liter}, we will recall some well known facts:
The Gauss-Bonnet-Chern Theorem, real interpolation by the trace method, the
definition of box dimension, the Whitney decomposition of an open subset of
$\R^n$, properties of the level sets of H\"older functions and the approximation
of $C^{1,\alpha}$ isometric immersions by mollification. These results will be
used in Section \ref{sec:distr-pfaff-jacob} to give a suitable definition of the
distributional Jacobian, the distributional Pfaffian and well defined notions of
their integrals over sets with fractal boundary. This section parallels most of
the ideas from \cite{olbermann2015integrability} and adapts them to the current
setting. In Section
\ref{sec:proof-mainthm}, we combine the results from Section
\ref{sec:distr-pfaff-jacob} to
prove Theorem \ref{thm:cov}. Theorem \ref{thm:boundcurv} will then be obtained from
Theorem \ref{thm:cov} by arguing in exactly the same way as in the proof of the
case $n=2$ in \cite{conti2012h}.
\subsection{Notation}
Except for the proof of Theorem \ref{thm:boundcurv}, our investigations will
take place in a single chart $U\subset\R^n$ of an $n$-dimensional Riemannian
manifold $(M,g)$. 
For $k=0,1,2,\dots$, the $C^k$-norms $\|\cdot\|_{C^k(U)}$ are defined by
\[
\|u\|_{C^k(U)}=\sum_{0\leq j\leq k}\sup_{x\in U}|D^ju(x)|\,.
\]
For $\alpha\in(0,1]$, the H\"older semi-norms $[\cdot]_\alpha$ are defined by
\[
[u]_\alpha=\sup_{\substack{x,x'\in U\\x\neq y}}\frac{|u(x)-u(x')|}{|x-x'|}\,.
\]
Finally, the $C^{k,\alpha}$ norms
$\|\cdot\|_{C^{k,\alpha}(U)}$ are defined by
\[
\|u\|_{C^{k,\alpha}(U)}=\|u\|_{C^k(U)}+[D^ku]_\alpha\,.
\]
In the chart $U$, the metric $g$ is a smooth function on $U$ with values in
the positive definite $n\times n$ matrices $\Sym_n^+$, $g\in
C^\infty(U;\Sym^+_n)$. We make $C^{k,\alpha}(U;\Sym^+_n)$ a normed space by
setting
\[
\|g\|_{C^{k,\alpha}(U;\Sym^+_n)}=\sum_{i,j=1,\dots,n}
  \|g_{ij}\|_{C^{k,\alpha}(U)}\,.
\]

An immersion $y\in C^1(U;\R^{n+1})$ is an isometric
immersion (w.r.t.~$g$) if and only if $Dy^TDy=g$.
The tangent space at every $x\in U$ will be identified with
$\R^n$, and hence vector fields are identified with $\R^n$ valued functions on
$U$. Let $\Lambda^{p}\R^n$ denote the set of rank $p$ multi-vectors in $\R^n$, i.e.,
the linear space 
\[
\Lambda^p\R^n=\left\{ \sum_{i_1,\dots,i_p\in\{1,\dots,n\}}a_{i_1,\dots,i_p}\d x_{i_1}\wedge\dots \wedge \d x_{i_p} :\,
 a_{i_1,\dots,i_p}\in\R\right\}\,,
\]
Hence, $p$-forms will be functions on $U$ with values in $\Lambda^p\R^n$. We 
make $C^{k,\alpha}(U;\Lambda^p\R^n)$  a normed space by setting
\[
\|a\|_{C^{k,\alpha}(U;\Lambda^p\R^n)}=\sum_{i_1,\dots,i_p}\|a_{i_1,\dots,i_p}\|_{C^{k,\alpha}(U)}
\]
for $a=\sum_{i_1,\dots,i_p}a_{i_1,\dots,i_p}\d x_{i_1}\wedge\dots \wedge \d x_{i_p} $.\\
\\
The symbol ``$C$'' will be used as follows: A statement such as ``$a\leq C(\alpha) b$''
is to be understood as ``there exists a numerical constant $C$ only depending on
$\alpha$ such that $a\leq
Cg$''. Whenever the dependence of $C$ on other parameters is clear, we also
write ``$a\lesssim b$'' in such a situation. \\
\\
The non-negative real line will be denoted by $\R^+=[0,\infty)$. On $\R^+$, we
write $\d t/t$ for the measure defined by $A\mapsto \int_{\R^+}\chi_A(t) \d
t/t$, where $\chi_A$ is the characteristic function of the
measurable set $A\subset \R^+$.

\subsection*{Acknowledgments} This paper presents the main results of the first author's Masters thesis \cite{thesisBehr}.
\section{Results from the literature}
\label{sec:results-from-liter}
\subsection{The Gauss-Bonnet-Chern Theorem}
The Gauss-Bonnet-Chern Theorem states in particular that the Pfaffian form is an exact form, with an explicit formula for a primitive that can be written as a polynomial in the connection and curvature forms.\\
Let $(M,g)$ be a Riemannian manifold of even dimension $n$. 
For a given orthonormal frame $\{X_i\}_{i=1,\dots,n}$ and associated connection and curvature forms $\omega_i^j$, $\Omega_i^j$, let the forms $\Phi_i$, $i=1,\dots,n/2-1$ be defined by

\begin{equation}
\begin{split}
  \Phi_i=\sum_{\substack{\zeta\in\Sym(n)\\ \zeta(1)=1}}&\sgn\, \zeta\,
  \omega^1_{\zeta(2)}\wedge
  \omega^1_{\zeta(3)}\wedge\dots\wedge\omega^1_{\zeta(n-2i)}
\wedge
  \Omega^{\zeta(n-2i+1)}_{\zeta(n-2i+2)}\wedge\dots\wedge
  \Omega^{\zeta(n-1)}_{\zeta(n)}\,.
\end{split}\label{eq:3}
\end{equation}
\begin{theorem}[Gauss-Bonnet-Chern, \cite{MR0011027}]
\label{thm:GBC}
We have $\Pf=\d\Pi(\omega)$, where 

\begin{equation}
\Pi(\omega)=\frac{1}{\pi^n}\sum_{i=0}^{n-1}\frac{(-1)^i}{1\cdot 3\cdots (2n-2i-1)i!2^{n+i}}\Phi_i\,,\label{eq:11}
\end{equation}
with $\Phi_i$, $i=1,\dots,n/2-1$, as defined in \eqref{eq:3}, and $\Omega^i_j$
defined as a function of the $\omega^k_l$ through \eqref{eq:10}.
\end{theorem}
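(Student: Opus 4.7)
The plan is to establish $\Pf = \d\Pi(\omega)$ by direct computation from Cartan's structure equations. The essential inputs are (i) the skew-symmetry relations $\omega^i_j = -\omega^j_i$ and $\Omega^i_j = -\Omega^j_i$, valid because the frame is orthonormal; (ii) the structure equation $\d\omega^i_j = \Omega^i_j - \sum_k \omega^i_k\wedge\omega^k_j$, which one reads off from \eqref{eq:10}; and (iii) the second Bianchi identity $\d\Omega^i_j = \sum_k(\Omega^i_k\wedge\omega^k_j - \omega^i_k\wedge\Omega^k_j)$, obtained by differentiating the structure equation for $\Omega^i_j$ and using $\d^2 = 0$.

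Given these, I would apply $\d$ to each $\Phi_i$ from \eqref{eq:3} via the Leibniz rule. The differential of a factor $\omega^1_{\zeta(j)}$ produces two types of terms: an $\Omega^1_{\zeta(j)}$-contribution that promotes a $\Phi_i$-like expression into a $\Phi_{i+1}$-like one, and a quadratic contribution $-\sum_k \omega^1_k\wedge\omega^k_{\zeta(j)}$ that introduces \emph{off-diagonal} connection factors $\omega^k_{\zeta(j)}$ with $k\neq 1$. The differential of an $\Omega^{\zeta(j)}_{\zeta(j+1)}$ factor produces further off-diagonal $\omega\wedge\Omega$ contributions via Bianchi. The crucial claim is that, after the weighted summation defining $\Pi(\omega)$, all off-diagonal contributions cancel in pairs between consecutive $i$, while the diagonal $\omega^1\mapsto\Omega^1$ promotions telescope through the sum. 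What survives is a single expression built entirely from $n/2$ curvature factors, which one identifies with $\Pf$ after using skew-symmetry to symmetrize over $\Sym(n)$.

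The main obstacle is the combinatorial bookkeeping of this cancellation: one must carefully track signs from antisymmetrization, from reordering forms of odd and even degree, and from relabeling summation indices $\zeta\in\Sym(n)$ with $\zeta(1)=1$. The precise values of the numerical constants in \eqref{eq:11} are forced by the requirement that all off-diagonal terms cancel, so the computation is unforgiving of errors.

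An alternative and more conceptual route, which is the one followed in Chern's original argument \cite{MR0011027}, is to lift everything to the unit tangent sphere bundle $\pi\colon SM\to M$: there one constructs a globally defined transgression form $\tilde\Pi$ satisfying $\d\tilde\Pi = \pi^*\Pf$, and the local expression \eqref{eq:11} is recovered by pulling $\tilde\Pi$ back along the section of $SM$ induced by the first frame vector $X_1$. This avoids direct combinatorics at the cost of developing the differential-geometric framework of principal bundles and invariant polynomials on $\mathfrak{so}(n)$.
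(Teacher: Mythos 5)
The paper does not prove this statement at all: Theorem \ref{thm:GBC} is quoted from Chern \cite{MR0011027} as background material (it sits in the section ``Results from the literature''), so there is no internal proof to compare yours against. Your second, ``alternative'' route is in fact precisely the argument of the cited source: one constructs the transgression form $\tilde\Pi$ on the unit sphere bundle, proves $\d\tilde\Pi=\pi^*\Pf$ there, and recovers the local formula \eqref{eq:11} by pulling back along the section of the sphere bundle determined by the frame vector $X_1$ (which exists here exactly because $U$ is a chart carrying a global orthonormal frame). That route is correct and is what the citation is intended to cover.

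Your first route, however, is not yet a proof, and the gap sits exactly where the content of the theorem lies. You apply $\d$ to the $\Phi_i$ of \eqref{eq:3}, note that the structure equations and the Bianchi identity produce curvature promotions plus ``off-diagonal'' terms, and then assert that with the weights in \eqref{eq:11} everything cancels except a multiple of $\Pf$. In Chern's computation this assertion is made precise by introducing a second family of auxiliary forms (his $\Psi_i$, built like the $\Phi_i$ but with one contracted $\omega\wedge\omega$, resp.\ $\omega\wedge\Omega$, factor) and proving a two-term recursion of the shape $\d\Phi_i=\Psi_{i-1}+c_i\,\Psi_i$ with explicit constants $c_i$; the coefficients in \eqref{eq:11} are then the unique weights that make the $\Psi_i$ telescope away, and the surviving $\Psi_{-1}$-type term is identified with $\Pf$ using skew-symmetry. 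Saying that ``the constants are forced by the requirement that the off-diagonal terms cancel'' is circular unless you actually derive such a recursion or an equivalent identity; as written, the central combinatorial step is assumed rather than proved. A practical warning if you do carry out the verification: check the constants against \cite{MR0011027} rather than against the display \eqref{eq:11}, since the printed formula appears to transcribe Chern's constants with $n$ in place of $n/2$ (and sums over $i=0,\dots,n-1$ although $\Phi_i$ only exists for $i\leq n/2-1$), and the displayed definition of $\mathrm{Pf}(\Omega)$ lacks the factor $\sgn\,\zeta$; a direct computation matched against these displays would fail for spurious reasons.
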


\subsection{Real interpolation via  the trace method}
\label{sec:real-interp-via}
We are going to use some standard constructions from real interpolation theory.
The following way to introduce the real interpolation spaces is due to Lions \cite{MR0159212}.
Let  $E_0$ and $E_1$ be two Banach spaces that are continuously embedded in a 
Hausdorff topological vector space. This is only necessary to guarantee that $E_0 \cap E_1$ and 
$E_0 + E_1 = \{ e_0 + e_1 \mid e_0 \in E_0, \ e_1 \in E_1 \}$ are also Banach spaces with the following norms:
\[
\begin{split}
\|x\|_{E_0 \cap E_1} &:= \max \{\|x\|_{E_0}, \|x\|_{E_1} \} \\
\|x\|_{E_0 + E_1} &:= \inf \{\|x_0\|_{E_0} + \|x_1\|_{E_1} \ : \ x_0 \in E_0, \ x_1 \in E_1, \ x_0 + x_1 = x \}
\end{split}
\]
\begin{definition}
For $\theta \in (0,1) $ and  $1 \leq p \leq \infty$ we denote by $V(p, \theta,
E_1, E_0)$ the set of all functions $u \in W^{1,p}_{loc}(\R_+,E_0\cap E_1;\d
t/t)$ such that, with $u_{*,\theta}(t):=t^\theta u(t)$ and
$u'_{*,\theta}:=t^\theta u'(t)$, we have
\[
u_{*,\theta} \in  L^p(\R^+, E_1; \d t/t) ,\quad u'_{*,\theta} \in  L^p(\R^+, E_0; \d t/t)\,,
\]
and we define a norm on $V = V(p, \theta, E_1, E_0)$ by
\[
\|u\|_V := \|u_{*,\theta}\|_{ L^p(\R^+, E_1; \d t /t)} + \| u'_{*,\theta}\|_{ L^p(\R^+, E_0; \d t/t)}\,.
\]
\end{definition}
It can be shown that those functions are continuous in $t=0$. We define the real interpolation spaces as follows:
\begin{definition} The real interpolation space $(E_0, E_1)_ {\theta, p}$ 
is defined as set of traces of functions  belonging to $V(p, 1-\theta, E_1, E_0)$ at $t=0$ together with the norm:
\[
\|x\|^{Tr}_{(\theta, p)} = \inf \{ \|u\|_V \mid u \in V(p,1-\theta,E_1,E_0), \, u(0) = x \}
\]
\end{definition}
We conclude with two estimates for $x \in E_{0} \cap E_{1}$.
\begin{lemma} 
\label{lem:interpolest}
Let $x \in E_0 \cap E_1$. Then
\begin{equation} \label{eq:interpolest1}
\|x\|^{Tr}_{(\theta, \infty)} \leq C \|x\|_{E_0}^{1-\theta} \|x\|_{E_1}^{\theta}\,,
\end{equation}
and for $u \in V(\infty,1-\theta,E_1,E_0)$ with $u(0) = x$:
\begin{equation} \label{eq:interpolest2}
\|x-u(t)\|_{E_{0}} \leq C(\theta) t^{\theta} \|u\|_{V}\,.
\end{equation}
\end{lemma}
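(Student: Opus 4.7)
The estimates are classical facts about the trace characterization of real interpolation, so my plan is to produce them by a direct construction for the first and by the fundamental theorem of calculus for the second.

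For the first inequality, my plan is to construct an explicit extension $u \in V(\infty, 1-\theta, E_1, E_0)$ with $u(0) = x$. I would fix a smooth cutoff $\phi \colon \R^+ \to \R$ with $\phi(0) = 1$, compactly supported in $[0,1]$, and for a parameter $\lambda > 0$ (to be optimized) set $u(t) := \phi(t/\lambda)\, x$. Then $u(0) = x$, and since $x \in E_0 \cap E_1$, the function $u$ is in $W^{1,\infty}_{\loc}(\R^+, E_0 \cap E_1)$. A direct computation gives
\[
 \|u_{*,1-\theta}\|_{L^\infty(\R^+, E_1; \d t/t)} \leq \bigl(\sup_{s \in [0,1]} s^{1-\theta}|\phi(s)|\bigr)\, \lambda^{1-\theta}\, \|x\|_{E_1},
\]
\[
 \|u'_{*,1-\theta}\|_{L^\infty(\R^+, E_0; \d t/t)} \leq \bigl(\sup_{s \in [0,1]} s^{1-\theta}|\phi'(s)|\bigr)\, \lambda^{-\theta}\, \|x\|_{E_0},
\]
where one simply substitutes $s = t/\lambda$ (note that for $L^\infty$, the weight $\d t/t$ is irrelevant as long as positive-length intervals are still charged). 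Adding these and optimizing by the choice $\lambda = \|x\|_{E_0}/\|x\|_{E_1}$ (assuming neither is zero; the degenerate cases are trivial) yields $\|u\|_V \leq C\, \|x\|_{E_0}^{1-\theta}\|x\|_{E_1}^{\theta}$, which by definition of the trace norm gives \eqref{eq:interpolest1}.

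For the second inequality, I would use that $u \in W^{1,\infty}_{\loc}(\R^+, E_0 \cap E_1)$ implies the $E_0$-valued absolute-continuity identity $x - u(t) = -\int_0^t u'(s)\,\d s$, valid as an $E_0$-Bochner integral. Then
\[
 \|x - u(t)\|_{E_0} \leq \int_0^t \|u'(s)\|_{E_0}\,\d s = \int_0^t s^{-(1-\theta)}\,\|u'_{*,1-\theta}(s)\|_{E_0}\,\d s \leq \|u\|_V \int_0^t s^{\theta - 1}\,\d s = \frac{t^\theta}{\theta}\, \|u\|_V,
\]
which is \eqref{eq:interpolest2} with $C(\theta) = 1/\theta$.

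Neither step presents a genuine obstacle. The only slightly delicate point is the first: one has to (i) verify that the test function lies in the class $V(\infty, 1-\theta, E_1, E_0)$, which uses that $\phi$ is smooth and compactly supported so that both $u$ and $u'$ take values in $E_0 \cap E_1$, and (ii) handle the degenerate cases where $\|x\|_{E_0}$ or $\|x\|_{E_1}$ vanishes, where the optimization collapses but the claim is either trivial or follows by a limiting argument. Everything else is routine scaling.
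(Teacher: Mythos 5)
Your proof is correct and follows essentially the same route as the paper: an explicit scaled cutoff extension $u(t)$ of $x$ optimized at $\lambda=\|x\|_{E_0}/\|x\|_{E_1}$ for \eqref{eq:interpolest1} (the paper simply uses the piecewise-linear cutoff $(1-t/r)_+x$ instead of a smooth $\phi$), and the fundamental theorem of calculus with the bound $\|u'(s)\|_{E_0}\leq s^{\theta-1}\|u\|_V$ for \eqref{eq:interpolest2}.
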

\begin{proof}
For $r > 0$, we set  
\[
u(t) := \begin{cases} (1 - t/r)x \quad &\text{if } t<r \\ 0 \quad &\text{else} \end{cases} \, .
\]
Note that
\[
\|t^{1-\theta} u(t)\|_{E_1} \leq r^{1-\theta} \|x\|_{E_1}
\]
and
\[
\|t^{1-\theta} u'(t)\|_{E_0} \leq r^{-\theta} \|x\|_{E_0} 
\]
and choose $r = \frac{\|x\|_{E_0}}{\|x\|_{E_1}}$. This yields \eqref{eq:interpolest1}.\\
\\
For the second estimate, we write $u(0) - u(t) = \int_{0}^{t} u'(s) \d s$ and conclude
\[
\begin{split}
	\|x -u(t)\|_{E_{0}} &\leq \int_{0}^{t} \|u'(s)\|_{E_{0}} \d s\\
	&\leq \int_{0}^{t} s^{\theta-1} \|s^{1-\theta} u'(s)\|_{E_{0}} \d s \\
	&\leq \int_{0}^{t} s^{\theta-1} \|u\|_{V} \d s \\
	&\leq C(\theta)t^{\theta} \|u\|_{V} \, .
\end{split}
\]
\end{proof}

\subsection{Box dimension, Whitney decomposition and level sets of H{\"o}lder  functions}
We recall the following decomposition of an open set into cubes, due to Whitney \cite{MR1501735}, and
a bound of the number of cubes of a certain size.  
\begin{lemma} Let $U\subset \R^n$ be open. Then there is a countable collection of mutually disjoint axis-aligned cubes $W$ whose diameters are comparable to their distance from $\partial U$, i.e.
\begin{enumerate}[(i)]
\item $U = \cup_{Q\in W} \overline{Q}$
\item For all $Q \in W$ there is $k \in \Z$ and $l \in \Z^n$ such that $Q = 2^{-k}(l + {(0,1)}^n)$. We denote the sub-collection of all cubes of size $k$ by $W_k$.
\item $Q \cap Q' = \varnothing$ if $Q \not = Q'$
\item $\diam Q \leq \dist (Q, \partial U) \leq 4 \ \diam Q$
\end{enumerate}
\end{lemma}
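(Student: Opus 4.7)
The plan is the classical Whitney construction via dyadic cubes. For each $k \in \Z$ let $\mathcal{D}_k$ denote the family of open cubes $Q = 2^{-k}(l + (0,1)^n)$, $l \in \Z^n$, and form the pool
\[
F = \bigcup_{k \in \Z} \left\{Q \in \mathcal{D}_k : \overline{Q} \subset U \text{ and } \dist(\overline{Q}, \partial U) \geq \diam Q\right\}.
\]
Since any two dyadic cubes (from possibly different generations) are either disjoint or nested, I would take $W$ to consist of the elements of $F$ that are maximal under inclusion; this is well defined and countable, and property (ii) holds by construction. Property (iii) is then immediate: distinct maximal cubes in $W$ are pairwise incomparable dyadic cubes, hence pairwise disjoint as open sets.

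For the covering property (i), given $x \in U$ set $r = \dist(x, \partial U) > 0$ and choose $k \in \Z$ with $\sqrt{n}\, 2^{-k} \leq r/2 < \sqrt{n}\, 2^{-k+1}$. The unique $Q \in \mathcal{D}_k$ whose closure contains $x$ then satisfies $\diam Q \leq r/2$ and
\[
\dist(\overline{Q}, \partial U) \geq r - \diam Q \geq r/2 \geq \diam Q,
\]
so $Q \in F$; replacing $Q$ by the (possibly larger) maximal cube in $F$ that contains it produces some $Q' \in W$ with $x \in \overline{Q'}$.

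It remains to establish (iv). The lower bound $\diam Q \leq \dist(Q, \partial U)$ is encoded in the definition of $F$, so the real work is the upper bound. For $Q \in W \cap \mathcal{D}_k$, the unique dyadic parent $\widehat{Q} \in \mathcal{D}_{k-1}$ of $Q$ lies outside $F$ by maximality. Hence either $\overline{\widehat{Q}} \not\subset U$---in which case a straight segment from any point of $Q \subset U$ to a point of $\overline{\widehat{Q}} \setminus U$ crosses $\partial U$ at distance at most $\diam \widehat{Q} = 2 \diam Q$ from $Q$---or $\dist(\overline{\widehat{Q}}, \partial U) < \diam \widehat{Q}$, and then the inclusion $Q \subset \overline{\widehat{Q}}$ together with the triangle inequality places a point of $\partial U$ within $\diam \widehat{Q} + \diam \widehat{Q} = 4 \diam Q$ of $Q$.

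I do not expect a substantive obstacle: the statement is essentially combinatorial in nature, relying only on the nesting structure of dyadic cubes. The only point that requires a little care is the last estimate, where one has to translate the failure of the parent cube to belong to $F$ into a quantitative bound on the distance from $Q$ to $\partial U$.
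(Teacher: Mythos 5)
Your construction is correct and is essentially the standard Whitney decomposition argument (maximal dyadic cubes satisfying $\dist(\overline Q,\partial U)\ge\diam Q$), which is exactly what the paper relies on by citing Stein rather than proving the lemma; the case analysis on the dyadic parent gives the stated constants $1$ and $4$. Two cosmetic points only: the existence of maximal cubes in $F$ deserves the one-line remark that a fixed $Q\in F$ has only finitely many ancestors in $F$ (since $\dist(\overline{Q''},\partial U)\le\dist(\overline Q,\partial U)<\infty$ while $\diam Q''\to\infty$, using $\partial U\neq\varnothing$), and ``the unique $Q\in\mathcal D_k$ whose closure contains $x$'' is not unique when $x$ lies on the dyadic grid, though any choice works.
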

For a proof we refer to \cite{MR0290095}. 
\\
It turns out that $|W_k|$  is related to the \emph{box dimension} of the
boundary of the decomposed set -- this is made precise in Lemma
\ref{lem:whitneybox} below. First, we give the definition of box dimension
(cf.~e.g.~\cite{MR3236784}):
\begin{definition} \label{dfn:boxdim} Let $E \subset \R^{n}$ be bounded and $\beta>0$. We introduce the (upper) Hausdorff-type content
\begin{equation}
\label{eq:20}
\overline {H}^{\beta}(E) = \limsup_{\e \rightarrow 0} \inf \{ m\e^{\beta} \mid E \subset \cup_{i=1}^{m} B_{\e}(x_{i}) \}
\end{equation}
and define the (upper) box dimension of $E$ to be
\begin{equation*}
\bd E = \sup \{ s \mid \overline{H}^{s}(E)  = \infty \}\,.
\end{equation*}
\end{definition}
\begin{lemma} [\cite{MR880256}, Theorem 3.12]\label{lem:whitneybox} Let $U \subset \R^n$ be open and bounded such that $\bd \partial U < d$. Then there is $M>0$ such that
\begin{equation*}
|W_k| \leq M 2^{d k}\,.
\end{equation*}
\end{lemma}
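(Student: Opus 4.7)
The plan is to observe that every cube $Q \in W_k$ lies in a thin tubular neighborhood of $\partial U$, and that this neighborhood has controlled covering complexity precisely because of the hypothesis on the box dimension.

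First, I would convert the box-dimension hypothesis into a pointwise covering bound. Since $\bd \partial U < d$, pick $s$ with $\bd \partial U < s < d$. By Definition \ref{dfn:boxdim}, $\overline{H}^s(\partial U) < \infty$, so \eqref{eq:20} yields constants $C_1, \e_0 > 0$ such that the minimal number $N(\e)$ of balls of radius $\e$ needed to cover $\partial U$ satisfies $N(\e) \leq C_1 \e^{-s}$ for all $\e \leq \e_0$. Since $s < d$, this implies $N(\e) \leq (C_1 \e_0^{d-s})\e^{-d}$ for every $\e \leq \e_0$.

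Second, by Whitney property (iv), every $Q \in W_k$ satisfies $\dist(Q, \partial U) \leq 4 \diam Q = 4 \sqrt{n}\, 2^{-k}$, so $Q$ is contained in the tube $T_k := \{x \in \R^n : \dist(x, \partial U) < 5\sqrt{n}\, 2^{-k}\}$. Choose $k_0$ so that $5\sqrt{n}\, 2^{-k} \leq \e_0$ for all $k \geq k_0$. Cover $\partial U$ by at most $C_2\, 2^{dk}$ balls of radius $5\sqrt{n}\, 2^{-k}$ and double their radii to obtain a cover of $T_k$, hence of $\cup_{Q \in W_k} Q$, by the same number of balls of radius $10\sqrt{n}\, 2^{-k}$.

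Third, by a volume comparison, any ball of radius $10\sqrt{n}\, 2^{-k}$ meets at most a dimensional constant $C_3 = C_3(n)$ many cubes of $W_k$: the cubes are pairwise disjoint with common volume $2^{-kn}$, and each one meeting such a ball is contained in the concentric ball of radius $(10\sqrt{n}+\sqrt{n})2^{-k}$, whose volume is a dimensional constant times $2^{-kn}$. Combining gives $|W_k| \leq C_3 C_2\, 2^{dk}$ for all $k \geq k_0$, and the inequality extends to all $k \in \Z$ after enlarging $M$, using that $U$ is bounded so that $|W_k|$ is finite for each $k$.

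The only subtle step is the first one: upgrading the $\limsup$-based definition of box dimension to a uniform covering bound valid for all sufficiently small $\e$. The strict inequality $\bd \partial U < d$ is precisely what makes this work, since it allows interpolation through an intermediate exponent $s$; the rest of the argument is elementary geometry of the Whitney decomposition and a volume estimate.
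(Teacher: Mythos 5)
Your argument is correct. Note that the paper does not prove this lemma at all: it is quoted from the literature (\cite{MR880256}, Theorem 3.12), so there is no in-paper proof to compare against; what you have written is essentially the standard proof of that cited result. The three ingredients are all sound: (i) from $\bd\partial U<s<d$ and the definition of the box dimension via \eqref{eq:20} one gets, for all sufficiently small $\e$, a covering of $\partial U$ by at most $C_1\e^{-s}\leq C\e^{-d}$ balls of radius $\e$ (the passage from the $\limsup$ to a uniform bound for small $\e$, and the contrapositive ``$s>\bd\partial U\Rightarrow \overline H^s(\partial U)<\infty$'', are exactly as you say); (ii) Whitney property (iv) puts every $Q\in W_k$ inside the tube of width comparable to $2^{-k}$ around $\partial U$, which the doubled balls cover; (iii) disjointness of the Whitney cubes plus a volume comparison shows each ball of radius $10\sqrt n\,2^{-k}$ meets at most $C(n)$ cubes of $W_k$. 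One small point you should make explicit when extending the bound to all $k\in\Z$: boundedness of $U$ gives not only that each $|W_k|$ is finite, but also (via $\diam Q\leq\dist(Q,\partial U)\leq\diam U$) that $W_k=\varnothing$ for all $k$ below some threshold $k_{\min}$, so that only finitely many exceptional values $k_{\min}\leq k<k_0$ remain and a single enlargement of $M$ suffices; without this remark, the phrase ``extends to all $k\in\Z$'' would be problematic for $k\to-\infty$ where $2^{dk}\to 0$. With that one-line addition the proof is complete.
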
	
 
 We conclude this subsection with a lemma regarding the box dimension of pre-images of H{\"o}lder continuous functions.
\begin{lemma} \label{lem:hoelderpreimboxdim} Let $U \subset \R^n$ be open and
  bounded, $f \in C^{0,\alpha}(U)$. If $\beta \geq n - \alpha$
  then we have 
\[
\bd f^{-1}(r)  \leq \beta\quad\text{ for a.e. }r\in \R\,.
\]
\end{lemma}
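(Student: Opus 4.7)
The plan is to exploit Hölder continuity by covering $U$ with small balls at scale $\e$, noting that $f$ has oscillation at most $[f]_\alpha \e^\alpha$ on each ball, and then integrating over $r$ so that the total count of balls needed to cover level sets is controlled.

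First I would fix $\e>0$ and choose a cover of $U$ by balls $\{B_\e(x_i)\}_{i=1}^{m(\e)}$. Since $U$ is bounded, a standard packing argument gives $m(\e)\leq M\e^{-n}$ for some constant $M$ depending only on $U$. For each $i$, Hölder continuity yields
\[
f(B_\e(x_i)\cap U)\subset I_i\qquad\text{with}\qquad |I_i|\leq 2[f]_\alpha\e^\alpha.
\]
For a given $r\in\R$, the collection $\{B_\e(x_i):r\in I_i\}$ covers $f^{-1}(r)$. Denoting its cardinality by $N(\e,r)$ and using Fubini,
\[
\int_\R N(\e,r)\,\d r=\sum_{i=1}^{m(\e)}|I_i|\leq 2M[f]_\alpha\,\e^{\alpha-n}.
\]

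Next I would discretize. Fix $s>n-\alpha$ and set $\e_k=2^{-k}$. Then
\[
\int_\R\sum_{k=1}^\infty\e_k^s N(\e_k,r)\,\d r\leq 2M[f]_\alpha\sum_{k=1}^\infty 2^{-k(s+\alpha-n)}<\infty,
\]
so $\sum_k\e_k^s N(\e_k,r)<\infty$ for a.e.\ $r$, and in particular $\e_k^s N(\e_k,r)\to 0$. For such $r$, given any $\e\in(0,1)$ pick $k$ with $\e_{k+1}\leq\e<\e_k$; a ball of radius $\e_k$ is contained in a ball of radius $2\e$, so $f^{-1}(r)$ is covered by $N(\e_k,r)$ balls of radius $2\e$, yielding $\overline H^s(f^{-1}(r))=0$ via \eqref{eq:20} (after the harmless rescaling $2\e\mapsto\e$, which only changes constants). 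Hence for each fixed $s>n-\alpha$ we have $\bd f^{-1}(r)\leq s$ for a.e.\ $r$.

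Finally, to pass from a fixed $s$ to all $s>\beta\geq n-\alpha$ simultaneously, I would apply the previous step to the countable sequence $s_j=\beta+1/j$ and take the union of the exceptional null sets, obtaining $\bd f^{-1}(r)\leq s_j$ for every $j$ and a.e.\ $r$, whence $\bd f^{-1}(r)\leq\beta$ a.e.

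The main obstacle is essentially a book-keeping one: the integrand $N(\e,r)$ depends on $\e$, so one has to use a countable scale $\e_k=2^{-k}$ together with Fubini to upgrade the pointwise $L^1$-bound on $N(\e,r)$ into pointwise convergence $\e_k^s N(\e_k,r)\to 0$ for a.e.\ $r$, and then a small interpolation argument is required to control the content at all scales $\e$, not only at the dyadic ones. Apart from this, the argument is essentially Hölder oscillation plus Fubini.
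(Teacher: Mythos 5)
Your proof is correct, and it takes a genuinely different route from the paper's. The paper argues by contradiction: assuming a set $A$ of levels of positive measure with $\bd f^{-1}(r)>\beta$, it applies the Vitali covering lemma twice --- once in the target, to select disjoint intervals of length $\simeq\e_r^\alpha$ around the bad levels, and once in the domain, to pack at least $k\e_r^{-\beta}$ disjoint balls of radius $\simeq\e_r$ inside each bad level set --- and uses the H\"older bound to keep balls belonging to distinct selected levels disjoint, so that summing volumes gives $\L^n(f^{-1}(A))\gtrsim k\,\L^1(A)$ for arbitrary $k$, contradicting $\L^n(U)<\infty$. You run the same underlying count (at scale $\e$ there are only $\lesssim\e^{-n}$ balls, and each meets $f^{-1}(r)$ only for $r$ in an interval of length $\lesssim\e^\alpha$) directly: Fubini gives $\int_\R N(\e,r)\,\d r\lesssim\e^{\alpha-n}$, dyadic summability gives $\e_k^{s}N(\e_k,r)\to0$ for a.e.\ $r$ at each fixed $s>n-\alpha$, and the interpolation between dyadic scales together with the countable choice $s_j\downarrow\beta$ (needed because the exceptional null set depends on $s$, and you correctly include it) finishes the argument; the final passage from $\overline H^{s_j}(f^{-1}(r))=0$ for all $j$ to $\bd f^{-1}(r)\le\beta$ uses only the elementary monotonicity of the content in the exponent, which is fine to leave implicit. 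What each approach buys: yours avoids covering lemmas and the contradiction scheme, works at scales uniform in $r$, and actually yields the slightly stronger conclusion $\overline H^{s}(f^{-1}(r))=0$ for every $s>n-\alpha$, i.e.\ $\bd f^{-1}(r)\le n-\alpha$ a.e., with a measurable full-measure good set; the paper's argument, by letting the scale $\e_r$ depend on $r$, deals with the failure set directly (even as a set of positive outer measure), at the cost of the double Vitali bookkeeping. Either argument suffices for the way the lemma is used later, in \eqref{eq:12}.
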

\begin{proof}
For the sake of contradiction, assume that
there exists a set $A\subset \R$ of positive measure such that for all $r \in A$,
\[
\bd f^{-1}(r)  > \beta \, .
\]
Let $k \in \N$ be arbitrary. Then, by assumption, there is $0 < \e_r < 1$ for
every $r \in A$
such that 
\begin{equation} \label{eq:numberofballs}
\inf \{ m \e_r^\beta \mid f^{-1}(r) \subset \cup_{i=1}^m B_{\e_r}(x_i) \} > k \, .
\end{equation}
We conclude that at least $k \e_r^{-\beta}$  balls of radius $\e_r$ are
necessary to cover $f^{-1}(r)$ for $r\in A$.
Obviously, 
\[
A \subset \cup_{r \in A} \overline{B}_{C\e_r^\alpha}(r) \, .
\]
Using the Vitali covering lemma, we obtain $\mathcal{J} \subset A$ countable such that the balls $B_{C\e_j^\alpha}(j)$ 
(for $j \in \mathcal{J}$) are pairwise disjoint and $A \subset \cup_{j\in \mathcal{J}} B_{5C\e_j^\alpha}(j)$. 
Therefore, we have that
\[
\sum_{j \in \mathcal{J}} 2 C \e_j^\alpha \geq \frac{1}{5} \L^1(A) \, .
\]
Again using the Vitali covering lemma on the families $\{  \overline{B}_{\frac{1}{10}\e_j} (x) \mid x \in f^{-1}(j) \}$,
we obtain countable collections $\mathcal{I}_j$ of pairwise disjoint balls, such that
\[
f^{-1}(j) \subset \bigcup_{B \in \mathcal{I}_j} \hat{B} \, ,
\]
where $\hat{B}$ denotes the concentric ball with five times the radius. Note
that by \eqref{eq:numberofballs}, we have $|\mathcal{I}_j| > k \e^{-\beta}$ for
every $j\in \mathcal J$.
Moreover, for $ B = \overline{B}_{\frac{1}{10}\e_j} (x) \in \mathcal{I}_j$ and $B' = \overline{B}_{\frac{1}{10}\e_{j'} }(x') \in \mathcal{I}_{j'}$ we observe that if $B \cap B' \neq \varnothing$:
\[
|j - j'| = |f(x) - f(x')| \leq \|f\|_{C^{0,\alpha}} |x-x'|^\alpha <  \|f\|_{C^{0,\alpha}} \e_j^\alpha
\]
and hence $j=j'$. We conclude that 
\[
\begin{split}
\L^n(f^{-1}(A)) &\geq \sum_{j \in \mathcal{J}} \sum_{B \in \mathcal{I}_j} \L^n(B)  \\
&\gtrsim \sum_{j \in \mathcal{J}} k \e_j^{n-\beta} \geq \sum_{j \in \mathcal{J}} k \e_j^{\alpha} \geq \frac{k}{10\|f\|_{C^{0,\alpha}}} \L^1(A) \, ,
\end{split}
\]
where we used that by assumption $n - \beta \leq \alpha$. This is a contradiction since $k$ was arbitrary.
\end{proof}

\subsection{Approximating $C^{1,\alpha}$ isometric immersions by smooth ones}
\label{sec:appr-c1-alpha}
Let $U\subset\R^n$ be open and bounded, and $\alpha\in(0,1]$. We consider an 
immersion $y\in C^{1, \alpha}(U;\R^{n+1})$, and  write
\[
Dy^TDy=g\,.
\]
We will assume that $g$ is a smooth function $U\to \R^{n\times n}$, with values
in the positive definite $n\times n$ matrices $\Sym^+_n$. I.e., 
$g\in C^\infty(U;\mathrm{Sym}^+_n)$.\\  
\\
Let $\varphi$ be a standard symmetric mollifier; i.e., $\varphi \in
C^{\infty}_{c}(\R^{n})$, $\varphi(x)=\varphi(-x)$ for $x\in \R^n$ and
$\int_\R^n \varphi(x)\d x = 1$.
We set $\varphi_{\e}(x) = \e^{-n}\varphi(\frac{x}{\e})$ and define the
mollifications $y_\e$ as well as their induced metrics $g_\e$ by setting
\begin{equation}
\begin{split}
  y_\e:=&\varphi_\e*(y\chi_U)\\
  g_\e:=&Dy_\e^TDy_\e\,.
\end{split}\label{eq:16}
\end{equation}

The following estimate from \cite{conti2012h} is crucial for our analysis: 
\begin{lemma}[Proposition 1 in \cite{conti2012h}] \label{lem:quadratic}
 If $y\in C^{1,\alpha}$, then we have
\[
\|Dy^{T}_\e Dy_\e-Dy^TDy\|_{C^r(U;\Sym^+_n)}\leq C\e^{2\alpha-r}\,.
\]
\end{lemma}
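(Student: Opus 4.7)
\textbf{Plan of proof for Lemma \ref{lem:quadratic}.}

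The plan is to reduce the estimate to a commutator estimate of Constantin--E--Titi type. Since $g = Dy^T Dy$ is smooth, I first split
\[
Dy_\e^T Dy_\e - Dy^T Dy = \bigl(Dy_\e^T Dy_\e - (Dy^T Dy)_\e\bigr) + \bigl(g_\e - g\bigr).
\]
The second summand is harmless: because $g \in C^\infty$, standard mollifier estimates give $\|g_\e - g\|_{C^r} \lesssim \e^2$ (differentiating under the convolution and using $\int \varphi_\e = 1$ together with a Taylor expansion of $D^j g$). Since $\alpha \le 1$ and $\e$ can be taken $\le 1$, we have $\e^{2} \le \e^{2\alpha - r}$ for every $r\ge 0$, so this term is acceptable. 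On the remaining "commutator" term, writing $A := Dy$, I would use the well-known identity
\[
(A^T A)_\e(x) - A_\e(x)^T A_\e(x) = \int \varphi_\e(x-y)\bigl(A(y) - A_\e(x)\bigr)^T\bigl(A(y) - A_\e(x)\bigr)\,\d y,
\]
which one verifies by expanding the right-hand side and using $\int \varphi_\e = 1$.

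With this identity the $C^0$ bound is immediate: on $\supp \varphi_\e(x-\cdot)$ we have
\[
|A(y) - A_\e(x)| \le |A(y) - A(x)| + |A(x) - A_\e(x)| \lesssim [A]_\alpha \e^\alpha = [Dy]_\alpha \e^\alpha,
\]
so $\|(A^T A)_\e - A_\e^T A_\e\|_{C^0} \lesssim [Dy]_\alpha^2 \e^{2\alpha}$. To get the $C^r$ bound, I differentiate the displayed identity in $x$. For a first derivative, the chain rule gives two types of terms: one where the derivative hits $\varphi_\e$, and one where it hits one of the factors $A_\e(x)$. The second type of term is of the form
\[
-\partial_i A_\e(x)^T\!\!\int \varphi_\e(x-y)\bigl(A(y) - A_\e(x)\bigr)\,\d y + (\text{transpose}),
\]
and the inner integral vanishes because $\int \varphi_\e(x-y)A(y)\,\d y = A_\e(x)$. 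Hence only the term with $\partial_i \varphi_\e$ survives, and using $\|\partial_i \varphi_\e\|_{L^1} \lesssim \e^{-1}$ gives $\|D\bigl((A^TA)_\e - A_\e^T A_\e\bigr)\|_{C^0} \lesssim \e^{2\alpha - 1}$.

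For $|\beta| = r \ge 2$, a straightforward induction on $r$ applied to the same identity shows that $D^\beta\bigl((A^TA)_\e - A_\e^T A_\e\bigr)$ is a sum of integrals against $D^\gamma \varphi_\e$ (with $|\gamma| \le r$), multiplied by polynomials in the lower-order derivatives $D^{\gamma'}\!A_\e(x)$ with $|\gamma'| \le r - |\gamma|$; the cross terms that would spoil the bound vanish because $\int D^\gamma \varphi_\e(x-y)\bigl(A(y) - A_\e(x)\bigr)\,\d y$ equals $D^\gamma A_\e(x)$ when $|\gamma| \ge 1$ (and hence is absorbed into the polynomial in derivatives of $A_\e$) rather than providing a new loss. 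Each factor of the form $D^{\gamma'} A_\e$ with $|\gamma'|\ge 1$ can be estimated by $\lesssim \e^{\alpha - |\gamma'|}$ using the Hölder continuity of $A = Dy$ and $\int D^{\gamma'}\varphi_\e = 0$. Collecting everything, each term carries precisely the factor $\e^{2\alpha - r}$, which is the desired bound.

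The main obstacle is the combinatorial bookkeeping in the induction step: one must verify that, after differentiating the commutator identity $r$ times, the Hölder factor of order $\alpha$ appears \emph{quadratically} in every surviving term, so that no term only gains $\e^{\alpha - r}$ from the naive Leibniz count. This is exactly what the CET-commutator structure guarantees, through the vanishing $\int \varphi_\e(x-y)(A(y) - A_\e(x))\,\d y = 0$ and its derivatives, and is the reason the estimate is quadratic rather than linear in $\e^\alpha$.
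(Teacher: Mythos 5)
Your proposal is correct and takes essentially the route the paper points to: the paper does not prove Lemma \ref{lem:quadratic} itself but refers to Proposition 1 of \cite{conti2012h} and to the Constantin--E--Titi commutator estimate, and your identity $(A^TA)_\e - A_\e^TA_\e = \int\varphi_\e(x-y)\bigl(A(y)-A_\e(x)\bigr)^T\bigl(A(y)-A_\e(x)\bigr)\,\d y$ together with the cancellations $\int\varphi_\e(x-y)\bigl(A(y)-A_\e(x)\bigr)\d y=0$, $\int D^\gamma\varphi_\e=0$ and the bounds $\|D^\gamma A_\e\|_{C^0}\lesssim[Dy]_\alpha\e^{\alpha-|\gamma|}$ is exactly that argument, and your Leibniz bookkeeping for $r\ge 2$ does produce the factor $\e^{2\alpha-r}$ in every surviving term. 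The one point you leave implicit is that the estimate $\|g*\varphi_\e-g\|_{C^r}\lesssim\e^2$ for the smooth part uses the vanishing first moment of the mollifier (its symmetry, which the paper assumes); with $\int\varphi_\e=1$ alone one only gets $O(\e)$, which would not suffice at $r=0$ once $\alpha>1/2$.
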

For  the reader who is unfamiliar with this estimate, we mention that its proof
is based on the  commutator estimate 
\[
\|(fg) \ast \varphi_{\e} - ( f \ast \varphi_{\e})( g \ast \varphi_{\e})\|_{C^{r}} \leq C_{r} \e^{2\alpha -r} [f]_{C^{0,\alpha}} [g]_{C^{0,\alpha}} \, .
\]
which  appeared first in context of the Onsager conjecture on Energy Conservation
for Euler's equation (see \cite{MR1298949}).\\
\\
From   Lemma \ref{lem:quadratic} and the interpolation inequality
$\|u\|_{C^{1,\beta}}\leq C \|u\|_{C^1}^{1-\beta}\|u\|_{C^2}^\beta$ (see Lemma \ref{lem:interpolest}), we obtain
\begin{lemma}
\label{lem:gcon}
If $y\in C^{1,\alpha}(U,\R^n)$ and $g_\e$ is defined by \eqref{eq:16}, then 
\[
g_\e\to g \quad\text{ in } C^{1, \beta}(U;\Sym^+_n)\quad \text{ for all
}\beta<2\alpha-1\,.
\]
\end{lemma}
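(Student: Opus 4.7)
The plan is to combine Lemma \ref{lem:quadratic} at two different orders of smoothness with the stated interpolation inequality. Set $w_\e := g_\e - g = Dy_\e^T Dy_\e - Dy^T Dy$. By Lemma \ref{lem:quadratic} applied with $r=1$ and $r=2$, we have
\[
\|w_\e\|_{C^1(U;\Sym^+_n)} \leq C\e^{2\alpha-1}, \qquad \|w_\e\|_{C^2(U;\Sym^+_n)} \leq C\e^{2\alpha-2}.
\]

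Next I would invoke the interpolation inequality $\|u\|_{C^{1,\beta}} \leq C\|u\|_{C^1}^{1-\beta}\|u\|_{C^2}^{\beta}$. This is the instance of \eqref{eq:interpolest1} from Lemma \ref{lem:interpolest} obtained by taking $E_0 = C^1(U;\Sym^+_n)$, $E_1 = C^2(U;\Sym^+_n)$, $\theta=\beta$, and using the classical identification of the real interpolation space $(C^1, C^2)_{\beta,\infty}$ with $C^{1,\beta}$ (with equivalent norms). Applied to $w_\e$, this yields
\[
\|w_\e\|_{C^{1,\beta}(U;\Sym^+_n)} \leq C\,\e^{(2\alpha-1)(1-\beta)}\,\e^{(2\alpha-2)\beta} = C\,\e^{2\alpha-1-\beta}.
\]

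Whenever $\beta < 2\alpha - 1$, the exponent $2\alpha-1-\beta$ is strictly positive, so $\|g_\e - g\|_{C^{1,\beta}(U;\Sym^+_n)} \to 0$ as $\e\to 0$. This is precisely the claimed convergence. The argument has no real obstacle: once Lemma \ref{lem:quadratic} provides the $C^1$ and $C^2$ bounds, the only nontrivial input is the trace-method interpolation inequality, which is directly supplied by Lemma \ref{lem:interpolest}. The one minor subtlety worth flagging explicitly in the write-up is the identification of $(C^1,C^2)_{\beta,\infty}$ with the Hölder space $C^{1,\beta}$, but this is standard and requires no separate argument here.
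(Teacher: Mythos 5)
Your argument is correct and is exactly the paper's own route: the paper obtains Lemma \ref{lem:gcon} precisely by combining Lemma \ref{lem:quadratic} (at the two orders $r=1$ and $r=2$) with the interpolation inequality $\|u\|_{C^{1,\beta}}\leq C\|u\|_{C^1}^{1-\beta}\|u\|_{C^2}^{\beta}$ from Lemma \ref{lem:interpolest}, yielding the same bound $\|g_\e-g\|_{C^{1,\beta}}\lesssim \e^{2\alpha-1-\beta}\to 0$ for $\beta<2\alpha-1$.
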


For the rest of this section, let $0<\beta<2\alpha-1$.
Let $\mathcal U$ be a small neighborhood of $g$ in $C^{1,\beta}(U,\Sym_n^+)$.
At every point $x\in U$, an ordered basis of the tangent space is given by
$(\partial_1,\dots,\partial_n)$. To this ordered basis, and for every $\tilde
g\in\mathcal U$, we may apply the
Gram-Schmidt process with respect to $\tilde g(x)$, and obtain an orthonormal frame
$(X_1,\dots,X_n)$. Let $U_1$ be a suitably chosen
 neighborhood of $g(x)$ in $\Sym_n^+$. Note that the  map 
\[
\begin{split}
  U_1&\to
  \R^{n\times n}\\
  \tilde g(x)&\mapsto (X_1(x),\dots,X_n(x))
\end{split}
\]
 is in $C^\infty(U_1;\R^{n\times n})$.  Hence, the map
\[
\begin{split}
  \mathcal U\to& C^{1,\beta}(U;\R^{n\times n})\\
  \tilde g\mapsto & (X_1,\dots,X_n)
\end{split}
\]
is continuous. (It is in order to have this continuity
why we choose a particular orthonormal frame, instead of choosing an arbitrary one.)
We define dual one-forms $\theta^i\in C^\infty(U;\Lambda^1\R^n)$ by requiring
$\theta^i(X_j)=\delta_j^i$, where $\delta_j^i$ is the Kronecker delta. By this
definition, we have 
  \[
(\theta^i(\partial_j))_{i,j=1,\dots,n}=\left((\d
  x_j(X_i))_{i,j=1,\dots,n}\right)^{-1}
\]
which is again smooth as a function of the $X_i$ (in some uniform neighborhood
of the $X_i(x)$ given by $\tilde g(x)\mapsto (X_1(x),\dots,X_n(x))$, $x\in
U$, $\tilde g\in\mathcal U$), 
and hence the map $\tilde g\mapsto (\theta^1,\dots,\theta^n)$ is continuous from
$\mathcal U$ to $C^{1,\beta}(U;\Lambda^1\R^n)^n$.  The connection one forms associated
to the frame $X$ are defined by the first structural equation,
\[
\d\theta^i=\sum_j\omega^i_j\wedge \theta^j\,,\quad \omega^i_j=-\omega_i^j\,.
\]
This defines $(\omega_i^j(x))_{i,j=1,\dots,n}$ as a smooth function of
$(\d\theta^1(x),\dots,\d\theta(x))$ and $(\theta^1(x),\dots,\theta(x))$. This
in turn implies our final conclusion, which we state as a lemma:
\begin{lemma}
\label{lem:contlem}
The map $\tilde g\mapsto (\omega_i^j)_{i,j\in 1,\dots,n}$ defined above, associating to a
metric $\tilde g$ a connection-one form satisfying \eqref{eq:10}, is continuous from
  some neighborhood $\mathcal U\subset C^{1,\beta}(U;\Sym^+_n)$ of $g$ to $C^{0,\beta}(U;\Lambda^1\R^n)^{n\times n}$.
\end{lemma}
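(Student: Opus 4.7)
The plan is to extend the continuity chain already sketched in the discussion preceding the lemma by one further step: recovering the connection one-forms from the dual one-forms and their exterior derivatives. This last step loses exactly one derivative, which explains the drop from $C^{1,\beta}$ for $\theta^i$ to $C^{0,\beta}$ for $\omega^i_j$.

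Taking for granted the continuity of the maps $\tilde g \mapsto (X_1,\dots,X_n)$ and $\tilde g \mapsto (\theta^1,\dots,\theta^n)$ from $\mathcal U$ into $C^{1,\beta}$, I first note that the exterior derivative $d \colon C^{1,\beta}(U;\Lambda^1\R^n) \to C^{0,\beta}(U;\Lambda^2\R^n)$ is bounded linear, so $\tilde g \mapsto d\theta^i$ is continuous into $C^{0,\beta}$. Expanding in the $\theta$-basis,
\[
d\theta^i \;=\; \sum_{j<k} c^i_{jk}\,\theta^j\wedge\theta^k, \qquad c^i_{jk} \;=\; d\theta^i(X_j,X_k),
\]
each coefficient $c^i_{jk}$ is a bilinear pairing of a $C^{0,\beta}$ two-form with two $C^{1,\beta}$ vector fields, hence lies in $C^{0,\beta}(U)$ and depends continuously on $\tilde g$.

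The heart of the matter is the explicit algebraic inversion of the first structural equation. Writing $\omega^i_j = \sum_k \Gamma^i_{jk}\theta^k$, the relations $d\theta^i = \sum_j \omega^i_j\wedge\theta^j$ and $\omega^i_j = -\omega^j_i$ force $c^i_{jk} = \Gamma^i_{kj} - \Gamma^i_{jk}$ together with $\Gamma^i_{jk} = -\Gamma^j_{ik}$, a finite-dimensional linear system that is solved uniquely by a Koszul-type identity expressing each $\Gamma^i_{jk}$ as a fixed linear combination of the $c^a_{bc}$. Consequently $\Gamma^i_{jk}\in C^{0,\beta}(U)$ depends continuously on $\tilde g$.

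Finally, $\omega^i_j = \sum_k \Gamma^i_{jk}\theta^k$ realises each connection form as a finite sum of products of a $C^{0,\beta}$ coefficient with the $C^{1,\beta}\hookrightarrow C^{0,\beta}$ one-form $\theta^k$; since pointwise multiplication is continuous $C^{0,\beta}\times C^{0,\beta}\to C^{0,\beta}$, chaining these continuous maps yields the desired continuity into $C^{0,\beta}(U;\Lambda^1\R^n)^{n\times n}$. I do not expect a genuine obstacle here. The only point requiring care is to verify that $\mathcal U$ can be chosen small enough that Gram--Schmidt and the matrix inversion $(dx_j(X_i))^{-1}$ remain uniformly nondegenerate on $U$; these are open conditions at $g$ and hold on a sufficiently small neighborhood. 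All remaining work is a routine tracking of regularity through linear and bilinear operations between H\"older spaces.
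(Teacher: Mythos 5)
Your proposal is correct and follows essentially the same route as the paper: the paper's argument is exactly the chain of continuity statements preceding the lemma (Gram--Schmidt frame, dual coframe, exterior derivative), concluding with the remark that the first structural equation together with $\omega^i_j=-\omega^j_i$ determines $\omega(x)$ pointwise as a smooth function of $(\theta(x),\d\theta(x))$. You merely make that last step explicit via the Koszul-type inversion $\Gamma^i_{jk}$ in terms of the coefficients $c^a_{bc}$, which is a welcome but not substantively different elaboration.
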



\section{Distributional Pfaffians,  Jacobians, and their integrals over sets
  with fractal boundary}
\label{sec:distr-pfaff-jacob}
\subsection{Distributional  Jacobians and Pfaffians for H\"older functions}
\label{sec:weak-pfaff-jacob}
The aim of the present subsection is to give a definition of the Jacobian
determinant for H\"older continuous functions, as well as a definition of the
Pfaffian form for metrics with H\"older continuous derivatives. In both cases,
the H\"older exponent has to be large enough for this to be possible. The main
technical ingredient is real interpolation, and the core of the argument is
contained in the proof of Proposition \ref{prop:traceinter} below.\\
\\
Recall that for an open and bounded set $U \subset \R^n$, we denote the space of
smooth $k$-forms on $U$ by $C^{\infty}(U;\Lambda^{k}\R^n)$. Furthermore, we set
\[
C^{\infty}_{cl}(U;\Lambda^{k}\R^n) = \{ \omega \in C^{\infty}(U;\Lambda^{k}\R^n) \mid \d \omega = 0 \} \, . 
\]
In the following, we will only be interested in the case $k=n-1$. We introduce two norms on $C^{\infty}(U;\Lambda^{n-1}\R^n) / C^{\infty}_{cl}(U;\Lambda^{n-1}\R^n)$,
\[
\begin{split}
\|\omega\|_{X^{n-1}_0} & := \inf \{ \|\omega + a\|_{C^{0}(U;\Lambda^{{n-1}}\R^n)} \mid a \in C^{\infty}_{cl}(U;\Lambda^{{n-1}}\R^n) \} \\
\|\omega\|_{X^{n-1}_1} & := \|\d \omega\|_{C^{0}(U;\Lambda^{{n}}\R^n)}
\end{split}
\]
and denote by $X^{n-1}_0$ and $X^{n-1}_1$ the completion of $C^{\infty}(U;\Lambda^{{n-1}}\R^n) / C^{\infty}_{cl}(U;\Lambda^{{n-1}}\R^n)$ with respect to these norms.
We introduce the shorthand notation
\[ 
X_\theta=(X_0^{n-1},X_1^{n-1})_{\theta,\infty} \, .
\]
\begin{proposition}
\label{prop:traceinter}
  Let $k_1,\dots,k_J\in \N$ and $0\leq I<J$ with $\sum_{i=1}^Jk_i+I=n-1$. For
  $\omega_{i}\in C^\infty(U;\Lambda^{k_i}\R^n)$ with  $i=1,\dots,J$,  set
\[
M_{k,I}(\omega_1,\dots,\omega_J)=\omega_1\wedge\dots\wedge\omega_{J-I}\wedge \d \omega_{J-I+1}\wedge\dots\wedge \d \omega_{J}\,.
\]
Furthermore, let $\beta\in (0,1]^J$ such that $\theta:= \min_{i=1, \dots ,J - I} \beta_i + \sum_{i=J-I+1}^J \beta_i - I \in (0,1)$, and $X_\theta=(X_0^{n-1},X_1^{n-1})_{\theta,\infty}$.
Then 
\begin{equation}
\|M_{k,I}(\omega_1,\dots,\omega_J)\|_{X_\theta}\leq \prod_{i=1}^J\|\omega\|_{C^{0,\beta_i}}\,.\label{eq:17}
\end{equation}
Moreover, for $\tilde \theta<\theta$, $M_{k,I}$  extends to a multi-linear continuous  operator
\[
C^{0,\beta_1}(U;\Lambda^{k_1}\R^n)\times\dots\times
C^{0,\beta_J}(U;\Lambda^{k_J}\R^n)\to X_{\tilde \theta}\,.
\]
\end{proposition}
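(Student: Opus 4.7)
The approach is to apply the trace method of real interpolation directly: by the definition of the trace norm, it suffices to construct, for smooth forms $\omega_i$, a curve $u \in V(\infty, 1-\theta, X_1^{n-1}, X_0^{n-1})$ satisfying $u(0) = M_{k,I}(\omega_1, \ldots, \omega_J)$ together with $\|u\|_V \lesssim \prod_{i=1}^J \|\omega_i\|_{C^{0,\beta_i}}$. Fix a standard symmetric mollifier $\varphi$ and a cutoff $\chi \in C^\infty_c([0, \infty))$ with $\chi \equiv 1$ on $[0,1]$ and $\supp \chi \subset [0, 2]$, and set
\[
u(t) := \chi(t)\, M_{k,I}(\omega_{1,t}, \ldots, \omega_{J,t}), \qquad \omega_{i,t} := \omega_i \ast \varphi_t.
\]
All estimates will be driven by the mollification bounds $\|\omega_{i,t}\|_{C^0} \lesssim \|\omega_i\|_{C^{0,\beta_i}}$, $\|\d\omega_{i,t}\|_{C^0} + \|\partial_t\omega_{i,t}\|_{C^0} \lesssim t^{\beta_i - 1}\|\omega_i\|_{C^{0,\beta_i}}$, and $\|\d\partial_t\omega_{i,t}\|_{C^0} \lesssim t^{\beta_i - 2}\|\omega_i\|_{C^{0,\beta_i}}$, all of which follow from $\int\varphi = 1$ and $\int\nabla\varphi = \int\partial_t\varphi_t = \int\nabla\partial_t\varphi_t = 0$.

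The estimate on $\|u(t)\|_{X_1^{n-1}} = \|\d u(t)\|_{C^0}$ is direct: $\d^2 = 0$ kills the differentials of the $\d\omega_{j,t}$ factors, leaving only terms where $\d$ falls on an $\omega_{i,t}$ with $i \leq J-I$; the worst such term has exponent $\beta_i - 1 + \sum_{j > J-I}(\beta_j - 1)$ with $\beta_i = \min_{l \leq J-I}\beta_l$, which is exactly $\theta - 1$.

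The crux is the bound $\|u'(t)\|_{X_0^{n-1}} \lesssim t^{\theta - 1}$. Terms of $\partial_t u(t)$ in which $\partial_t$ hits an $\omega_{i,t}$ with $i \leq J - I$ have $C^0$-norm $\lesssim t^{\beta_i + \sum_{j > J-I}\beta_j - I - 1} \leq C t^{\theta - 1}$ on $[0, 2]$. The dangerous terms, where $\partial_t$ hits a $\d\omega_{i,t}$ with $i > J - I$, have $C^0$-norm only $\lesssim t^{\sum_{j > J-I}\beta_j - I - 1}$, which is strictly smaller than $t^{\theta - 1}$ by a factor of $t^{-\min_{l \leq J - I}\beta_l}$. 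Here I exploit the quotient structure of $X_0^{n-1}$: for each such $i$, introduce the smooth $(n-2)$-form
\[
S_i := \omega_{1,t} \wedge \cdots \wedge \omega_{J-I,t} \wedge \partial_t \omega_{i,t} \wedge \bigwedge_{\substack{j > J-I \\ j \neq i}} \d \omega_{j,t}.
\]
Expanding $\d S_i$ by the Leibniz rule (and using $\d^2 = 0$) produces, up to signs, the dangerous term plus correction terms in which $\d$ falls on some $\omega_{l,t}$ with $l \leq J-I$; each correction has $C^0$-norm $\lesssim t^{\beta_l + \sum_{j > J-I}\beta_j - I - 1} \leq C t^{\theta - 1}$. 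Since $\chi(t)$ is constant in $x$, $\chi(t) \d S_i = \d(\chi(t) S_i)$ is exact, hence closed; subtracting $\sum_i \pm \chi(t) \d S_i$ from $u'(t)$ preserves the equivalence class in $X_0^{n-1}$ and replaces the dangerous terms by corrections of size $t^{\theta - 1}$. The contribution of $\chi'(t) M_{k,I}(\omega_{1,t}, \ldots, \omega_{J,t})$, supported in $[1, 2]$ where $t^{\theta - 1}$ is bounded both ways, is absorbed trivially.

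Combining these estimates yields $\|u\|_V \lesssim \prod_i \|\omega_i\|_{C^{0,\beta_i}}$ and hence \eqref{eq:17}. For the extension to $\tilde\theta < \theta$, pick $\tilde\beta_i \leq \beta_i$ with $\tilde\theta = \min_{i \leq J-I}\tilde\beta_i + \sum_{j > J-I}\tilde\beta_j - I$ and apply \eqref{eq:17} (with exponent $\tilde\theta$) to the smooth approximants $\omega_i^{(m)} := \omega_i \ast \varphi_{1/m}$. Since $\omega_i^{(m)} \to \omega_i$ in $C^{0, \tilde\beta_i}$, a telescoping identity combined with multilinearity of $M_{k,I}$ shows that $\{M_{k,I}(\omega_1^{(m)}, \ldots, \omega_J^{(m)})\}_m$ is Cauchy in $X_{\tilde\theta}$; the limit defines the extension. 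The main obstacle is precisely the $X_0^{n-1}$ estimate: without the freedom to subtract the exact form $\sum_i \pm \chi(t) \d S_i$ from $u'(t)$, the dangerous terms have temporal decay $t^{\sum_{j > J-I}\beta_j - I - 1}$ rather than $t^{\theta - 1}$, and the interpolation argument breaks down.
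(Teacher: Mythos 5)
Your proof is correct and follows essentially the same route as the paper: you construct a curve with trace $M_{k,I}(\omega_1,\dots,\omega_J)$ (via explicit mollification rather than abstract representing curves for each factor $C^{0,\beta_i}=(C^0,C^1)_{\beta_i,\infty}$), estimate $t^{1-\theta}\|u(t)\|_{X^{n-1}_1}$ and $t^{1-\theta}\|u'(t)\|_{X^{n-1}_0}$, and your key device --- subtracting the exact forms $\chi(t)\,\d S_i$ to trade the terms containing $\partial_t \d\omega_{i,t}$ for terms with only $I$ exterior derivatives and one time derivative --- is exactly the paper's use of the quotient structure of $X^{n-1}_0$. Only cosmetic care is needed: mollify a H\"older-norm-preserving extension of the $\omega_i$ to $\R^n$ (or work on inner domains), and in the final density step choose all $\tilde\beta_i$ \emph{strictly} below $\beta_i$ (possible since $\tilde\theta<\theta$) so that $\omega_i\ast\varphi_{1/m}\to\omega_i$ in $C^{0,\tilde\beta_i}$, as mollifications need not converge in the endpoint H\"older norm.
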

\begin{proof}
  We use that $C^{0,\beta_i} = (C^0,C^1)_{\beta_i,\infty}$, write
  \[
  \omega_i = \sum_{1 \leq i_1 < \dots < i_{k_i} \leq n} v^i_{i_1, \dots, i_{k_i}}(0) \d x_{i_1} \wedge \dots \wedge \d x_{i_{k_i}}
  \]
  for some $v^i_{i_1, \dots , i_{k_i}} \in V(\infty, 1-\beta_i, C^1, C^0)$. Without loss of generality, we may assume that $\supp v^i_{i_1, \dots, i_{k_i}} \subset [0,1]$ and set
  \[
  v^i(t) = \sum_{1 \leq i_1 < \dots < i_{k_i}\leq n } v^i_{i_1, \dots, i_{k_i} }(t) \d x_{i_1} \wedge \dots \wedge \d x_{i_{k_i}} \, .
  \]
  The main idea is to write $\d M_{k,I} (v_1(t), \dots , v_J(t))$  and $(M_{k,I} (v_1(t), \dots , v_J(t)))'$ as wedge products of $(I+1)$ derivatives $\d v^i$ or $(v^i)'$ and $J-I-1$ factors of the form $v^i$.
  Observe that
  \[
  \d M_{k,I}(\omega_1, \dots , \omega_J) = \sum_{i=1}^{J-I} (-1)^{\sum_{j=1}^{i-1} k_j} \omega_1 \wedge \dots \wedge \omega_{i-1} \wedge \d \omega_i \wedge \omega_{i+1} \wedge \dots \wedge \omega_{J-I} \wedge \d \omega_{J-I+1} \wedge \dots \wedge \d \omega_J
  \]
  and hence, for all $t\in\R^+$,
  \[
  \| M_{k,I} (v^1(t), \dots , v^J(t)) \|_{X^{n-1}_1} \leq \sum_{i=1}^{J-I} \|v^i(t)\|_{C^1} \prod_{\substack{j= 1,\dots, J-I \\ j \neq i}} \|v^j(t)\|_{C^0} \prod_{l=J-I+1,\dots, J} \|v^l(t)\|_{C^1} \ .
  \]
  For $(M_{k,I}(v^1, \dots , v^J))'$, we use that the $X^{n-1}_0$-norm is only defined up to a closed form to avoid factors involving two derivatives. Note that
  \[
  \begin{split}
    v^1 \wedge &\dots \wedge v^{J-I} \wedge \d v^{J-I+1} \wedge \dots \wedge \d v^{i-1} \wedge (\d v^i)' \wedge \d v^{i+1} \wedge \dots \wedge \d v^J  \\
    = &\pm \d \left(v^1 \wedge \dots \wedge v^{J-I} \right) \wedge \d v^{J-I+1}
    \wedge \dots \wedge \d v^{i-1} \wedge  (v^i)' \wedge \d v^{i+1} \wedge \dots
    \wedge \d v^J\\
 &\pm \d \left(v^1 \wedge \dots \wedge v^{J-I} \wedge (v^i)' \right) \wedge \d v^{J-I+1}
    \wedge \dots \wedge \d v^{i-1}  \wedge \d v^{i+1} \wedge \dots
    \wedge \d v^J\,.
  \end{split}
  \]
Note that the $n-1$ form in the last line above is closed.  
When computing the $X^{n-1}_0$-norm of $(M_{k,I}(v^1, \dots , v^J))'$, we therefore may replace every
term   involving $(\d v^i)'$ with a sum of terms involving $I$ exterior and one
time   derivative. Thus we have
  \[
  \|(M_{k,I}(v^1(t), \dots , v^J(t)) )'\|_{X^{n-1}_0} \leq \sum_{i=1}^{J} \|(v^i)'(t)\|_{C^0} \prod_{\substack{j= 1,\dots, J-I \\ j \neq i}} \|v^j(t)\|_{C^0} \prod_{\substack{l=J-I+1,\dots, J \\ l \neq i}} \|v^l(t)\|_{C^1} \,
  \]
for all $t\in \R^+$.
By the definition of $\theta$,   we have that
  \[
  t^{1-\theta} = t^{1-\min_{i=1,\dots, J-I} \beta_i} \prod_{i=J-I + 1}^J t^{1-\beta_i}\,.
  \]
Recall that
  \[
  t^{1-\beta_i}\left( \|v^i(t)\|_{C^1} + \|(v^i)'(t)\|_{C^0}\right) \leq
  \|v^i\|_{V}\quad\text{ for  all } t\in\R^+\,,
  \]
  and observe that by the second estimate in Lemma \ref{lem:interpolest}, we have
  \[{}
  \|v^i(t)\|_{C^0} \leq \|v^i(0)\|_{C^0} + \|v^i(t) - v^{i}(0)\|_{C^0} \leq
  \|v^i(0)\|_{C^0} + t^{\beta_{i}}\|v^i\|_{V}\lesssim \|v^i\|_{V} \, .
  \]
  We conclude that for all $t\in \R^+$, we have
  \[
  \begin{split}
    \| t^{1-\theta} M_{k,I} (v^1(t), \dots , v^J(t)) \|_{X^{n-1}_1} &\lesssim \prod_{j= 1,\dots, J} \|v^j\|_{V(\infty, 1-\beta_{j}, C^{1},C^{0})} \\
    \| t^{1-\theta} (M_{k,I} (v^1(t), \dots , v^J(t)))' \|_{X^{n-1}_0} &\lesssim \prod_{j= 1,\dots, J} \|v^j\|_{V(\infty, 1-\beta_{j}, C^{1},C^{0})}\,.
  \end{split}
  \]
   Taking appropriate infima on both sides of these estimates completes the proof of
    \eqref{eq:17}.\\
\\
 To prove the statement about the extension of $M_{k,I}$, we
    only need to choose $\tilde\beta_i<\beta_i$ for $i=1,\dots,J$ such that
    \[
\tilde\theta=\min_{i=1,\dots,J-I}\tilde \beta_i+\sum_{i=J-I+1}^J\tilde\beta_i-I
\]
and note that $\omega_i$ can be approximated in
$C^{0,\tilde\beta_i}(U;\Lambda^{k_i}\R^n)$ by smooth functions
$\omega_{i,\delta}\in C^{\infty}(U;\Lambda^{k_i}\R^n)$,
\[
\omega_{i,\delta}\to \omega_i\quad\text{ in } C^{0,\tilde
  \beta_i}(U,\Lambda^{k_i}\R^n) \text{ as } \delta\to 0\,.
\]
By the estimate
\eqref{eq:17} applied with $\tilde \theta$  and $\tilde\beta_i$, $i=1,\dots,J$, 
\[
\delta\mapsto M(\omega_{1,\delta},\dots,\omega_{J,\delta})
\]
is a continuous function with values in $X_{\tilde \theta}$, whose limit does not depend on the
choice of the approximations $\omega_{i,\delta}$. This proves the proposition.
\end{proof}
\begin{remark}
  \label{rem:Pirem}
Let $\omega^i_j$ and $\Omega^i_j$ denote the connection and curvature forms associated
to the orthonormal frame $\{X_i\}_{i=1,\dots,n}$ that we fixed above in Section \ref{sec:appr-c1-alpha}. As a consequence of the
defining equation for $\Omega$, the equation \eqref{eq:3} defines the forms
$\Phi_i$ as a polynomial in the $\d \omega^i_j$ and $\omega^i_j$, and by Theorem
\ref{thm:GBC},  there exist constants $c^I_{i_1,\dots,i_{2(n-I-1)}}\in \R$ for
$I=1,\dots, n/2-1$, $i_1,\dots,i_{2(n-I-1)}\in\{1,\dots,n\}$, such that the Gauss-Bonnet-Chern form $\Pi$ can be written as
\[
\Pi(\omega)=\sum_{\substack{I=1,\dots,n/2-1\\
i_1,\dots,i_{2(n-I-1)}\in\{1,\dots,n\}}}
c^I_{i_1,\dots,i_{2(n-I-1)}}M_{\underbrace{(1,\dots,1)}_{\text{J times }1},I}\left(\omega^{i_1}_{i_2},\dots,\omega^{i_{2(n-I)-3}}_{i_{2(n-I-1)}}\right)\,,
\]
where $J=n-1-I$.
By Proposition \ref{prop:traceinter}, it follows that the map
\[
\omega\mapsto \Pi(\omega)
\]
defined by \eqref{eq:11} is continuous from $C^{0,\beta}(U;\Lambda^1\R^n)^{n\times n}$ to $X_\theta$
for every $\theta<n\beta/2-(n/2-1)$.
\end{remark}
\subsection{Integrating distributional Pfaffians and Jacobians over sets with fractal boundary}

The interpolation space $X_\theta=(X_0^{n-1},X_1^{n-1})_{\theta,\infty}$ has been
chosen in a way such that elements in this space can be integrated over fractals
of dimension up to (but not including) $n-1+\theta$; it will be shown now how
this works.
We adapt the arguments from \cite{MR1119189}, and  give a well
defined meaning to integrals over differentials $\d M$ with $M\in X_\theta$.

We fix some $U\subset \R^n$ with $d:=\bd\partial U<n-1+\theta$. Let $W$ denote the Whitney
decomposition of $U$. 
Recalling the properties of trace spaces from Section \ref{sec:real-interp-via}, we have that
for $M\in X_\theta$  there exists $M(\cdot)\in W^{1,1}_{\mathrm{loc}}(\R^+;C^1(U;\Lambda^{n-1}\R^n))$ such that
\[
t^{1-\theta}\left(\|M(t)\|_{C^1}+\|M'(t)\|_{C^0}\right)\leq
\|M\|_{X_\theta}\quad\text{ for all }t\in\R^+\,,
\]
and 
\[
\lim_{t\to 0}\|M-M(t)\|_{C^0}=0\,.
\]

\begin{definition}
\label{def:fracint}
  For $M\in (X_0^{n-1},X_1^{n-1})$,   
we define the integral $\int_U\d M$ by 
\[
\int_U\d M:=\sum_{Q\in W} \int_Q \d M(\diam Q)+\int_{\partial Q} (M-M(\diam
Q))\,.
\]
\end{definition}

\begin{lemma}
\label{lem:intwelldef}
The above definition makes $\int_U \d M$ well defined for $M\in X_\theta$ for $n-1+\theta>d$. Furthermore, the map
\[
M\mapsto\int_U \d M
\]
is continuous on $X_\theta$. 
\end{lemma}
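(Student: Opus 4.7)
The plan is to show absolute convergence of the defining sum via a uniform per-cube estimate and then to deduce both continuity and independence of the choice of representative $M(\cdot) \in V(\infty, 1-\theta, X_1^{n-1}, X_0^{n-1})$ with trace $M(0) = M$. The key observation is that for each Whitney cube $Q$, both terms of the summand are controlled by $(\diam Q)^{n-1+\theta}\|M(\cdot)\|_V$, and the Whitney-box count of Lemma \ref{lem:whitneybox} restricts the number of cubes at each scale enough for summability -- precisely because of the hypothesis $n-1+\theta > d$.

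For the per-cube estimate, the volume term satisfies
\[
\left|\int_Q \d M(\diam Q)\right| \leq |Q|\,\|M(\diam Q)\|_{X_1^{n-1}},
\]
and combined with the $V$-norm bound $t^{1-\theta}\|M(t)\|_{X_1^{n-1}} \leq \|M(\cdot)\|_V$ this gives $\left|\int_Q \d M(\diam Q)\right| \lesssim (\diam Q)^{n-1+\theta}\|M(\cdot)\|_V$. For the boundary term, since adding any closed $(n-1)$-form to $M - M(\diam Q)$ leaves the integral over $\partial Q$ unchanged (Stokes on a cube), one has
\[
\left|\int_{\partial Q}(M - M(\diam Q))\right| \leq |\partial Q|\,\|M - M(\diam Q)\|_{X_0^{n-1}},
\]
and the second estimate of Lemma \ref{lem:interpolest}, $\|M - M(\diam Q)\|_{X_0^{n-1}} \leq C(\theta)(\diam Q)^\theta\|M(\cdot)\|_V$, produces the same bound $\lesssim (\diam Q)^{n-1+\theta}\|M(\cdot)\|_V$. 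Fixing any $d' \in (d, n-1+\theta)$, which exists by hypothesis, Lemma \ref{lem:whitneybox} provides $|W_k| \lesssim 2^{d'k}$, so the total contribution of scale $k$ is at most $\lesssim 2^{-k(n-1+\theta-d')}\|M(\cdot)\|_V$. The resulting geometric series converges, giving
\[
\left|\int_U \d M\right| \lesssim \|M(\cdot)\|_V.
\]
Taking the infimum over admissible $M(\cdot)$ provides $\left|\int_U \d M\right| \lesssim \|M\|_{X_\theta}$, which simultaneously establishes absolute convergence of the defining sum and continuity of $M \mapsto \int_U \d M$.

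For independence of the choice of $M(\cdot)$, let $M_1(\cdot), M_2(\cdot)$ be two representatives with common trace $M$. The cube-wise difference between the two candidate values is
\[
\int_Q \d(M_1(\diam Q) - M_2(\diam Q)) - \int_{\partial Q}(M_1(\diam Q) - M_2(\diam Q)),
\]
which vanishes by Stokes' theorem applied to the form $M_1(\diam Q) - M_2(\diam Q) \in X_0^{n-1} \cap X_1^{n-1}$, after routine approximation by smooth forms. Summing over $Q$ shows the two definitions agree. The main technical point is really just the bookkeeping that matches the interpolation index $\theta$ against the fractal dimension $\bd \partial U$; this is settled by the choice $d' \in (d, n-1+\theta)$, which is exactly where the strict hypothesis $n-1+\theta > d$ enters.
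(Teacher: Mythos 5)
Your argument is correct and follows essentially the same route as the paper's proof: the same per-cube estimates (volume term via the $X_1^{n-1}$ bound $t^{1-\theta}\|M(t)\|_{X_1^{n-1}}\lesssim\|M\|_{X_\theta}$, boundary term via Lemma \ref{lem:interpolest}), the same Whitney-cube count from Lemma \ref{lem:whitneybox}, and the same convergent geometric series under $n-1+\theta>d$. Your additional care — choosing $d'\in(d,n-1+\theta)$ so the Whitney lemma applies strictly, and spelling out independence of the representative via a cube-wise Stokes argument — only makes explicit what the paper leaves implicit.
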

\begin{proof}
Let us fix $M\in X_\theta$ and choose $M(\cdot)\in W^{1,1}_{\mathrm{loc}}(\R^+;C^1(U;\Lambda^{n-1}\R^n))$ as
above. Let $Q\in W$. We estimate
  \[
  \begin{split}
    \left| \int_Q \d M(\diam Q) \right| \leq &\L^n(Q) \|M(\diam Q)
    \|_{X^{n-1}_1}\\
    \leq &\L^n(Q) (\diam Q)^{\theta - 1} \|M\|_{X_\theta}
  \end{split}
  \]
  and
  \[
  \begin{split}
    \left| \int_{\partial Q} (M-M(\diam Q)) \right| \leq& \H^{n-1}(Q) \|M-M(\diam
    Q)\|_{X^{n-1}_0}\\
 \lesssim &\H^{n-1}(Q)(\diam Q)^\theta \|M\|_{X_\theta} \, ,
  \end{split}
  \]
where we have used Lemma \ref{lem:interpolest} in the second estimate.
  By Lemma \ref{lem:whitneybox}, the number of cubes in $W$ of sidelength
  $2^{-k}$ can be estimated by $C 2^{kd}$, where the constant $C$ may depend on
  the domain $U$, and $d=\bd \partial U$. Hence we may estimate
  \[
  \begin{split}
    \left| \int_U \d M \right| \leq &\L^n(Q) (\diam Q)^{\theta - 1} \|M\|_{X_\theta}+\sum_{Q \in W} \H^{n-1}(Q) (\diam Q)^\theta
    \|M\|_{X_\theta}\\
 \lesssim &\sum_{k \in \N} 2^{dk} 2^{-(n-1)k} 2^{-\theta k}
    \|M\|_{X_\theta}
  \end{split}
  \]
 The sum on the right hand side is absolutely convergent, by the assumption $d < n-1
 + \theta$. This implies that $\int_U \d M$ exists and is independent of the
 choice of $M(\cdot)$ (which makes $\int_U\d M$ well defined).
Moreover the map $M \mapsto \int_U \d M$ is linear and thus continuous.
\end{proof}

\subsection{Weak convergence of the Brouwer degree}
\label{sec:weak-conv-brouw}
Let $U\subset\R^n$ with $\bd\partial U=d\in [n-1,n)$, and $\alpha\in (0,1)$ such
that $n\alpha-d>0$. The following lemma and proposition are taken from
\cite{olbermann2015integrability}; we repeat the proofs for the convenience of
the reader. In the lemma, we use the notation $(A)_\e:=\{x\in\R^n:\dist(x,A)<\e\}$
for $A\subset\R^n$.
\begin{lemma}
\label{lem:bdrydim}
Let $V\subset\R^n$ be open and bounded, $U\subset\subset V$, $n-1<\bd \partial U=d<n$, $0<\alpha<1$ such that $n\alpha>d$,  and $u\in C^{0,\alpha}(V;\R^n)$. Then
\[
\L^n\left((\partial U)_\e\right)\to 0\quad\text{ as }\e\to 0\,.
\]
\end{lemma}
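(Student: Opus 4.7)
\textbf{Proof plan for Lemma \ref{lem:bdrydim}.} The conclusion $\mathcal L^n\bigl((\partial U)_\e\bigr)\to 0$ does not actually involve the map $u$, and the hypothesis $n\alpha>d$ is not needed either; both are presumably there because the next proposition (not shown) uses them in tandem with this lemma. So the task reduces to showing that the $\e$-neighborhood of a bounded set of upper box dimension strictly less than $n$ has Lebesgue measure tending to zero. The strategy is simply to extract an efficient cover of $\partial U$ by $\e$-balls from the box dimension assumption, inflate these balls by a factor of two to cover $(\partial U)_\e$, and sum the volumes.

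\textbf{The key step.} Since $d<n$, I pick $s\in(d,n)$. By Definition \ref{dfn:boxdim}, $\overline H^s(\partial U)<\infty$, which means
\[
\limsup_{\e\to 0}\,\inf\bigl\{m\e^s\;:\;\partial U\subset\textstyle\bigcup_{i=1}^m B_\e(x_i)\bigr\}<\infty.
\]
Because a finite $\limsup$ controls all sufficiently small values, there exist $C<\infty$ and $\e_0>0$ such that for every $\e\in(0,\e_0)$ one can find points $x_1,\dots,x_{N(\e)}\in\R^n$ with
\[
\partial U\subset\bigcup_{i=1}^{N(\e)}B_\e(x_i)\quad\text{and}\quad N(\e)\e^s\leq C.
\]

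\textbf{Inflating the cover.} If $y\in(\partial U)_\e$, then there is $p\in\partial U$ with $|y-p|<\e$; choosing $i$ with $p\in B_\e(x_i)$ gives $|y-x_i|<2\e$. Therefore
\[
(\partial U)_\e\subset\bigcup_{i=1}^{N(\e)} B_{2\e}(x_i),
\]
and so, writing $\omega_n$ for the volume of the unit ball in $\R^n$,
\[
\mathcal L^n\bigl((\partial U)_\e\bigr)\;\leq\; N(\e)\cdot\omega_n(2\e)^n\;\leq\;2^n\omega_n\,C\,\e^{n-s}.
\]
Since $s<n$, the right-hand side vanishes as $\e\to 0$, proving the claim.

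\textbf{Obstacles.} There are essentially none: once $s$ is chosen in the open interval $(d,n)$ the argument is elementary. The only subtlety is to confirm that ``finite $\limsup$'' suffices (as opposed to just giving a good bound along a subsequence), but this is immediate from the definition of $\limsup$. I expect the Hölder hypothesis on $u$ and the assumption $n\alpha>d$ to enter in the companion proposition, where presumably one estimates Lebesgue measure of $u((\partial U)_\e)$ or similar, using that a Hölder map of exponent $\alpha$ raises box dimension by at most the factor $1/\alpha$, so that the image $u(\partial U)$ has box dimension $\leq d/\alpha<n$.
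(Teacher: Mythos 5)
Your argument is correct for the statement as printed, and it is more elementary than the paper's: you pass from $s\in(d,n)$ and the finiteness of $\overline H^{s}(\partial U)$ to a near-optimal cover of $\partial U$ by $N(\e)\lesssim\e^{-s}$ balls of radius $\e$, inflate by a factor $2$, and count volume; neither $u$ nor $n\alpha>d$ is used, as you observe. However, you should be aware that the displayed conclusion of Lemma \ref{lem:bdrydim} is evidently a misprint: the paper's own proof establishes $\L^n\bigl((u(\partial U))_\e\bigr)\to 0$, and it is this image version that is invoked later (in the proof of Proposition \ref{prop:wcdeg} one needs $\L^n(u(\partial U))=0$ to get pointwise a.e.\ convergence of the degrees). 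Your proof of the literal statement cannot substitute for that, because $\L^n(\partial U)=0$ does not imply $\L^n(u(\partial U))=0$ when $u$ is merely H\"older continuous; this is exactly where the hypotheses $u\in C^{0,\alpha}$ and $n\alpha>d$ are needed.

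The good news is that your closing remark already contains the missing ingredient, and carrying it out reproduces the paper's proof almost verbatim: fix $\bar d$ with $d<\bar d<n\alpha$, cover $\partial U$ by $k\lesssim\delta^{-\bar d}$ balls of radius $\delta=\e^{1/\alpha}$ centered on $\partial U$ (the paper organizes this via the Vitali covering lemma, but your near-optimal covers work just as well), and push forward under $u$: the images lie in balls of radius $\|u\|_{C^{0,\alpha}}\delta^{\alpha}=\|u\|_{C^{0,\alpha}}\e$ around the points $u(x_i)$, so after inflating to radius $(\|u\|_{C^{0,\alpha}}+1)\e$ these cover $(u(\partial U))_\e$, and the volume count gives $\L^n\bigl((u(\partial U))_\e\bigr)\lesssim k\,\e^{n}\lesssim\delta^{n\alpha-\bar d}\to 0$. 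Equivalently, in your language, $\bd u(\partial U)\le \bar d/\alpha<n$ and then your inflation argument applies to $u(\partial U)$ in place of $\partial U$. So: correct as a proof of the printed claim, but to match what the paper proves and uses you must run the argument on the image $u(\partial U)$, as your sketch indicates.
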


\begin{proof}
We choose  $\delta:=\e^{\alpha^{-1}}$.  Let $x_i\in\partial U$,
$i=1,\dots,k$, be a finite collection of points in the boundary such that
\[
\begin{split}
  \partial U\subset& \bigcup_{i=1}^k B(x_i,\delta)\\
  B(x_i,\delta/5)\cap B(x_j,\delta/5)=&\emptyset \quad\text{ for
  }i,j\in\{1,\dots,k\},i\neq j\,.
\end{split}
\]
The existence of the collection $\{x_i\}$ is assured by the Vitali Covering
Lemma.  Now let  $d<\bar d<n\alpha$. This  implies $\bar H^{\bar d}(\partial
U)=0$ (with $\bar H$ defined in \eqref{eq:20}).
Choosing $\e$ small enough, we may assume that
\[
 k\delta^{\bar d} \leq 1\,, 
\]
We observe that the image of the boundary is covered by  the collection of balls
with centers $u(x_i)$ and radius  $\|u\|_{C^{0,\alpha}}\delta^\alpha$,
\[
\begin{split}
  u(\partial U)\subset& \bigcup_{i=1}^k B\left(u(x_i),\|u\|_{C^{0,\alpha}}\delta^\alpha\right)\\
  = &\bigcup_{i=1}^k B\left(u(x_i),\|u\|_{C^{0,\alpha}}\e\right)\,.
\end{split}
\]
Next we define $c_0=\|u\|_{C^{0,\alpha}}+1$ and obtain
\[
\left(u(\partial U)\right)_\e\subset \bigcup_{i=1}^k B\left(u(x_i),c_0\e\right)\,.
\]
Putting it all together, we have the chain of inequalities
\[
\begin{split}
  \L^n\left(\left(u(\partial U)\right)_\e\right)\leq &k \L^n(B(0,1))(c_0\e)^n\\
  \leq & C(u,n) (\delta^{\alpha n-\bar d})k \tilde\e^{\bar d}\\
  \leq & C(u,n) (\delta^{\alpha n-\bar d}) \\
  \to & 0 \text{ as }\e \to 0\,,
\end{split}
\]
which proves the lemma.
\end{proof}

\begin{proposition}
\label{prop:wcdeg}
  Let $u^j\in C^\infty(U;\R^n)$ with $u^j\to u$ in  $C^{0,\alpha}(U;\R^n)$, and
  $1< p<n\alpha/d$. Then 
\[
\deg(u^j,U,\cdot)\wto \deg(u,U,\cdot)\quad \text{ in }L^p(\R^n)\,.
\]
\end{proposition}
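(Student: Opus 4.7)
The plan is to combine (a) almost-everywhere pointwise convergence of $\deg(u^j,U,\cdot)$ with (b) a uniform bound $\|\deg(u^j,U,\cdot)\|_{L^p(\R^n)} \leq C$. Since the supports $u^j(\overline U)$ all lie in a common bounded set, an Egoroff-type argument then upgrades (a) and (b) to weak convergence against any $\psi \in L^{p'}(\R^n)$. For (a), $u^j \to u$ in $C^0$, so for any $z$ with $r := \dist(z, u(\partial U)) > 0$ the straight-line homotopy between $u^j$ and $u^k$ (for $j, k$ large) avoids $z$ on $\partial U$, and homotopy invariance of the Brouwer degree forces $\deg(u^j, U, z)$ to stabilize. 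One takes this common value as $\deg(u, U, z)$. The proof of Lemma \ref{lem:bdrydim} in fact establishes $\L^n\bigl((u(\partial U))_\e\bigr) \to 0$, so $u(\partial U)$ has zero Lebesgue measure, and the stabilization holds almost everywhere.

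For (b), I would argue by duality. For $\psi \in C_c^\infty(\R^n)$ with support disjoint from $u^j(\partial U)$, the classical change-of-variables formula for smooth maps yields
\[
\int_{\R^n} \psi(z)\,\deg(u^j, U, z)\,\d z \;=\; \int_U \psi(u^j)\,\det Du^j\,\d x \;=\; \int_U \d\bigl((u^j)^\ast \Psi\bigr),
\]
where $\Psi$ is any $(n-1)$-form with $\d\Psi = \psi\, \d z_1 \wedge \dots \wedge \d z_n$. The hypothesis $p < n\alpha/d$ is equivalent to $p' > n\alpha/(n\alpha - d)$, and since $d \geq n-1$ and $\alpha < 1$ this also forces $p' > n$. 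Via a Bogovskii-type construction I would choose $\Psi$ with support in a fixed bounded set and $\|\Psi\|_{W^{1,p'}} \lesssim \|\psi\|_{L^{p'}}$; Morrey's embedding then yields $\Psi \in C^{0,\gamma}$ with $\gamma = 1 - n/p'$ and $\|\Psi\|_{C^{0,\gamma}} \lesssim \|\psi\|_{L^{p'}}$. Applying Proposition \ref{prop:traceinter} with $J = n$, $I = n-1$, $\beta_1 = \gamma\alpha$ and $\beta_i = \alpha$ for $i \geq 2$ gives $\theta = (n-1+\gamma)\alpha - (n-1)$, and the equivalent rewriting $\gamma > d/\alpha - (n-1)$ of the hypothesis $p < n\alpha/d$ is exactly the condition $n - 1 + \theta > d$ of Lemma \ref{lem:intwelldef}. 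That lemma together with the $X_\theta$ bound of Proposition \ref{prop:traceinter} then produces
\[
\left|\int_U \d\bigl((u^j)^\ast \Psi\bigr)\right| \;\leq\; C\, \|(u^j)^\ast \Psi\|_{X_\theta} \;\lesssim\; \bigl(\|u^j\|_{C^{0,\alpha}}\bigr)^{n-1+\gamma}\,\|\psi\|_{L^{p'}},
\]
and taking the supremum over normalized smooth $\psi$ yields the uniform bound.

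The hardest part of the argument is precisely this duality step: one must find a primitive $\Psi$ with just enough regularity for Proposition \ref{prop:traceinter} to apply at the exponent $\theta$ dictated by the box dimension $d$ of $\partial U$, and verify that the three conditions $p < n\alpha/d$, $\gamma > d/\alpha - (n-1)$, and $n - 1 + \theta > d$ are one and the same. This interplay between the H\"older exponent of the map, the Morrey exponent of the test form, and the box dimension of $\partial U$ is what pins down the admissible range of $p$.
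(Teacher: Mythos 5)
Your proposal is essentially the paper's own proof: the same decomposition into almost-everywhere pointwise convergence of the degree (using that $u(\partial U)$ is Lebesgue-null, which is what the proof of Lemma \ref{lem:bdrydim} gives) plus a uniform $L^{p}$ bound by duality, and your identity $\int_U\psi(u^j)\det Du^j\,\d x=\int_U \d\bigl((u^j)^\ast\Psi\bigr)$ is exactly the paper's \eqref{eq:5}--\eqref{eq:4} written in pull-back form, fed into Proposition \ref{prop:traceinter} and Lemma \ref{lem:intwelldef} with the same exponent arithmetic ($\tilde\alpha=\gamma\alpha$ and $n-1+\theta>d\iff p<n\alpha/d$). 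The only deviation is the construction of the primitive: the paper takes the potential $\psi=D\zeta-D\zeta(0)$ with $\Delta\zeta=\varphi$ (Calder\'on--Zygmund plus Morrey), whereas a compactly supported Bogovskii primitive as you describe does not exist for general test functions (a $\Psi$ of class $W^{1,p'}$ supported in a fixed bounded set with $\d\Psi=\psi\,\d z_1\wedge\dots\wedge\d z_n$ forces $\int\psi\,\d z=0$ by Stokes); this is harmless, since compact support is not actually needed --- it suffices to normalize $\Psi$ at one point to control $\|\Psi\circ u^j\|_{C^0}$, as the paper does, or to first correct $\psi$ by a fixed bump carrying its mean.
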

\begin{proof}
Since $u^j$ is smooth, we have the classical change of variables type formula
\[
\int_{U}\varphi(u^j(x))\det Du^j\d x=\int_{\R^n}\varphi(z)\deg(u^j,U,z)\d z\,
\]
for any $\varphi\in L^1_{\mathrm{loc}}(\R^n)$. Let $p'$ be given by
$p^{-1}+(p')^{-1}=1$. We will show
\begin{equation}
\sup_{j\to\infty}\sup\left\{\int_{U}\varphi(u^j(x))\det Du^j\d x:\,\varphi\in
  L^{p'}(\R^n),\,\|\varphi\|_{L^{p'}}\leq 1\right\}<\infty.\label{eq:1}
\end{equation}
This implies that $\deg(u^j,U,\cdot)$ is bounded in $L^p$ and hence there exists
a weakly convergent subsequence. By the  convergence $u\to u^j$ in $C^0$, we
have that $\deg(u^j,U,\cdot)\to \deg(u,U,\cdot)$ pointwise in $S^2\setminus
u(\partial U)$. By Lemma \ref{lem:bdrydim}, $u(\partial U)$ has measure 0, which
implies
\[
\deg(u^j,U,\cdot)\to \deg(u,U,\cdot)\quad  \text{pointwise a.~e.},
\]
and hence we conclude
\[
\deg(u^j,U,\cdot)\wto \deg(u,U,\cdot)\quad  \text{in } L^p(S^n)\,.
\]
 Since we would have obtained the same starting from  any
subsequence of $u^j$,
we get the claim of the
proposition. It remains to show \eqref{eq:1}.\\
\\
Let us fix $\varphi\in L^{p'}(\R^n)$. We define $\zeta \in W^{2,p}(\R^n)$ by 
\[
\Delta \zeta=\varphi\,,
\]
and set $\psi(x)=D\zeta(x)-D\zeta(0)$. By standard elliptic regularity, we have $D\psi\in
L^{p'}(\R^n;\R^{n\times n})$ with 
\[
\|D\psi\|_{L^{p'}}\leq C \|\varphi\|_{L^{p'}}\,.
\]
Since $p<n\alpha/d<n/(n-1)$, we have $p'>n$ and hence, Morrey's inequality implies 
\begin{equation}
[\psi]_{C^{0,1-n/p'}}\leq C \|\varphi\|_{L^{p'}}\,.\label{eq:2}
\end{equation}
Let $\tilde \alpha:=(1-n/p')\alpha$.
We claim that $\psi\circ u^j\in C^{0,\tilde\alpha}$ with 
\begin{equation}
\|\psi\circ u^j\|_{C^{\tilde\alpha}}\leq
  C\|\varphi\|_{L^{p'}}\|u^j\|^{1-n/p'}_{C^{0,\alpha}}\,.\label{eq:6}
  \end{equation}
Indeed we have
\[
\begin{split}
  \sup_x |\psi\circ u^j(x)|=&\sup_x
  |\psi(u^j(x))-\psi(0)|\\
  \leq&[\psi]_{C^{0,1-n/p'}}\left|\sup_x|u^j(x)|-0\right|^{1-n/p'}\\
  \leq& [\psi]_{C^{0,1-n/p'}}\|u^j\|_{C^{0,\alpha}}^{1-n/p'}\,,
  \end{split}
\]
and furthermore
\[
\begin{split}
  |\psi\circ u^j(x)-\psi\circ u^j(x')|\leq &
  [\psi]_{C^{0,1-n/p'}}\left|u^j(x)-u^j(x')\right|^{1-n/p'}\\
  \leq & [\psi]_{C^{0,1-n/p'}} \|u^j\|_{C^{0,\alpha}}^{1-n/p'}|x-x'|^{\alpha
    (1-n/p')}\,.
\end{split}
\]
This proves the claim \eqref{eq:6}.
Next, we recall the identity
\begin{equation}
\varphi\circ u^j \det Du^j =\div \left(\psi\circ u^j\cof Du^j\right)\,,\label{eq:5}
\end{equation}
which can be verified easily by noting $\div\cof Du^j=0$ and $Du^j\cof Du^j=\det
Du^j \id_{n\times n}$. 
For the rest of this proof, we write
\[
M\equiv M_{(0,\dots,0),n-1}\,,
\]
where the right hand side has been defined in Section \ref{sec:weak-pfaff-jacob}.
We recall
\[
M(u_1^j,\dots,u_n^j)= u_1^j\d u_2^j\wedge\dots\wedge\d u_n^j\quad \text{ up to a closed
  $(n-1)$-form}\,,
\]
and express the  identity \eqref{eq:5} using this notation:
\begin{equation}
  \label{eq:4}
\varphi\circ u^j \, \d M (u_1^j,\dots,u_n^j)=\sum_{i=1}^n\d M(u_1^j,\dots,u_{i-1}^j,\psi_i\circ u^j,u_{i+1}^j,\dots,u_n^j)\,.
\end{equation}
Note that
\[
\begin{split}
  (n-1) (\alpha-1)+\tilde \alpha=&\frac{n\alpha}{p'}\\
  =&n\alpha-\frac{n\alpha}{p}\\
  <&n\alpha-d\,.
\end{split}
\]
Hence we may choose $\theta\in (n\alpha/p',n\alpha-d)$,
and we may estimate as follows:
\[
\begin{split}
  \left|\int_U \varphi(u^j(x))\det D u^j(x)\d x\right|=& \left| \int_U
    \sum_{i=1}^n\d M(u_1^j,\dots,u_{i-1}^j,\psi_i\circ
    u^j,u_{i+1}^j,\dots,u_n^j)\right|\\
  \stackrel{\text{Lemma \ref{lem:intwelldef}}}{\lesssim} & \sum_i
    \|M(u_1^j,\dots,u_{i-1}^j,\psi_i\circ
    u^j,u_{i+1}^j,\dots,u_n^j)\|_{X_\theta}\\
    \stackrel{\text{Prop.
        \ref{prop:traceinter}}}{\lesssim}&\sum_i\|u_i^j\|_{C^{0,\tilde\alpha}}
\prod_{k\neq i}\|u^j_k\|_{C^{0,\alpha}}\\
      \stackrel{\eqref{eq:6}}{\lesssim}&\|u^j\|_{C^{0,\alpha}}^{n/p}\|\varphi\|_{L^{p'}}\,.
    \end{split}
\]
This proves \eqref{eq:1} and hence the proposition.
\end{proof}
\begin{remark}
\label{rem:Snrem}
Let $\nu_\e$ be as in Section \ref{sec:appr-c1-alpha}. By using a smooth atlas
on $S^n$, and considering the situation in coordinate charts, we get as an
immediate  consequence of Proposition \ref{prop:wcdeg} that
\[
\deg(\nu_\e,U,\cdot)\wto \deg(\nu,U,\cdot)\quad\text{ in }L^p(S^n)\quad \text{ for } 1<p<n\alpha/d\,.
\]
\end{remark}

\section{Proof of Theorems \ref{thm:cov} and \ref{thm:boundcurv}}
\label{sec:proof-mainthm}
For the proof of Theorem \ref{thm:cov}, we first consider the case
$\varphi=\chi_U$. 
\begin{proposition}
\label{prop:notest}
 Let $U\subset\R^n$ be open and bounded with $\bd \partial U=d\in [n-1,n)$. Let
 $y\in C^{1,\alpha}(U;\R^{n+1})$  be an immersion with $n\alpha>d$, let $\nu \in
 C^{0,\alpha}(U;S^n)$ be the unit normal, and let $\Pf$ be the Pfaffian form
 obtained from the metric $g=Dy^TDy$. Then
\[
\int_U \Pf=\int_{S^n}\deg(\nu,U,z)\d\H^n(z)\,.
\]
\end{proposition}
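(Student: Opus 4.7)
The plan is to establish the identity first for smooth mollifications of $y$, then pass to the limit on both sides. For $\e > 0$, let $y_\e = \varphi_\e \ast (y\chi_U)$ as in \eqref{eq:16}, with induced metric $g_\e = Dy_\e^T Dy_\e$, normal $\nu_\e$, connection form $\omega_\e$, and Pfaffian $\Pf_\e$. Since $y_\e$ is smooth, \eqref{eq:7} applies and the classical change-of-variables formula \eqref{eq:8} with $\varphi \equiv 1$ yields
\[
\int_U \Pf_\e \;=\; \int_U \nu_\e^* \sigma_{S^n} \;=\; \int_{S^n} \deg(\nu_\e, U, z)\, \d\H^n(z).
\]
The task is then to show that each side converges to the corresponding object for $y$.

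For the right-hand side, Remark \ref{rem:Snrem} (combined with Lemma \ref{lem:bdrydim} to handle $\nu(\partial U)$) gives weak convergence $\deg(\nu_\e,U,\cdot) \wto \deg(\nu,U,\cdot)$ in $L^p(S^n)$ for some $p > 1$ with $p < n\alpha/d$. Since $S^n$ is compact, the constant $1$ lies in $L^{p'}(S^n)$, so testing against $1$ yields convergence of the right-hand side.

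For the left-hand side, I invoke Gauss--Bonnet--Chern (Theorem \ref{thm:GBC}) to write $\Pf_\e = \d \Pi(\omega_\e)$, and likewise aim to define $\int_U \Pf$ as $\int_U \d \Pi(\omega)$ via the fractal integration of Definition \ref{def:fracint}. By Lemma \ref{lem:gcon}, $g_\e \to g$ in $C^{1,\beta}(U;\Sym^+_n)$ for every $\beta < 2\alpha - 1$. Applying Lemma \ref{lem:contlem} to the chosen Gram--Schmidt construction, the connection forms converge $\omega_\e \to \omega$ in $C^{0,\beta}(U;\Lambda^1\R^n)^{n\times n}$. By Remark \ref{rem:Pirem}, the map $\omega \mapsto \Pi(\omega)$ is continuous into $X_\theta$ for every $\theta < n\beta/2 - (n/2 - 1)$, so $\Pi(\omega_\e) \to \Pi(\omega)$ in $X_\theta$. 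Finally, Lemma \ref{lem:intwelldef} shows that $M \mapsto \int_U \d M$ is continuous on $X_\theta$ provided $n - 1 + \theta > d$, whence $\int_U \d \Pi(\omega_\e) \to \int_U \d \Pi(\omega)$, which we take as the definition of $\int_U \Pf$.

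The main technical point is to verify that the three parameter constraints --- $\beta < 2\alpha - 1$, $\theta < n\beta/2 - (n/2 - 1)$, and $n - 1 + \theta > d$ --- can be satisfied simultaneously. Combining them, one needs $\beta$ satisfying $2d/n - 1 < \beta < 2\alpha - 1$, which has a nonempty solution set precisely under the hypothesis $n\alpha > d$. Choosing such $\beta$ (and corresponding $\theta$) completes the limit passage, gives
\[
\int_U \Pf = \lim_{\e \to 0} \int_U \Pf_\e = \lim_{\e \to 0} \int_{S^n} \deg(\nu_\e, U, z)\, \d\H^n(z) = \int_{S^n} \deg(\nu, U, z)\, \d\H^n(z),
\]
and concludes the proof.
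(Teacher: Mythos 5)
Your proposal is correct and follows essentially the same route as the paper: mollify $y$, use the smooth Gauss--Bonnet--Chern identity $\int_U\d\Pi(\omega_\e)=\int_{S^n}\deg(\nu_\e,U,z)\,\d\H^n(z)$, pass to the limit on the right via Remark \ref{rem:Snrem} and on the left via Lemmas \ref{lem:gcon}, \ref{lem:contlem}, Remark \ref{rem:Pirem} and Lemma \ref{lem:intwelldef}. Your explicit check that the constraints $\beta<2\alpha-1$, $\theta<n\beta/2-(n/2-1)$ and $\theta>d-(n-1)$ are simultaneously satisfiable exactly when $n\alpha>d$ matches the paper's choice of $\theta$ in \eqref{eq:18}.
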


\begin{proof}
  Let $y_\e,g_\e$ be as in \eqref{eq:16}, and let $\omega,\omega_\e$ be the
  connection one-forms associated to $g,g_\e$ respectively, as in Section
  \ref{sec:appr-c1-alpha}. Furthermore, let $\Omega,\Omega_\e$ be the curvature
  forms associated to $g,g_\e$ respectively.
Since $y_\e$ is smooth, we have
\[
\int_{U}\d\Pi(\omega_\e)=\int_{S^{n}}\deg(\nu_{y_\e},U,z)\d\H^{n}\,.
\]
We are going to pass to the limit $\e\to 0$ on both sides. On the right hand
side, the limit is  $\int_{S^{n}}\deg(\nu,U,z)\d\H^{n}$ by  Remark \ref{rem:Snrem}. 
It remains to show that the limit on the right hand side is 
$\int_{U}\d\Pi(\omega)$. 
By Lemma \ref{lem:gcon} and Lemma \ref{lem:contlem}, we have

\begin{equation}
\left(\omega_i^j(g_\e)\right)_{i,j=1,\dots,n}\to \left(\omega_i^j(g)\right)_{i,j=1,\dots,n} \quad\text{ in } C^{0,\beta}(U;\Lambda^1\R^n)\label{eq:9}
\end{equation}
for all $\beta<2\alpha-1$. Remark \ref{rem:Pirem} implies that for 
\begin{equation}
\theta<n\frac{2\alpha-1}{2}-\left(\frac n2-1\right)=n\alpha-(n-1)\,,\label{eq:18}
\end{equation}
we have
\[
\Pi(\omega(g_\e)))\to \Pi(\omega(g)) \quad\text{ in } X_\theta\text{ as }\e\to 0
\] 
By our assumptions on $d,\alpha$, we may choose $\theta$ such that it fulfills
\eqref{eq:18} and additionally $\theta>d-(n-1)$.
By Lemma \ref{lem:intwelldef}, we get 
\[
\int_U \d\Pi(\omega(g_\e))\to \int_U \d\Pi(\omega(g))\quad\text{ as }\e\to 0\,.
\]
This proves the proposition.
\end{proof}
\begin{remark}
\label{rem:zust}
We note that Proposition \ref{prop:notest} 
could also have been deduced using
the techniques from \cite{MR2745198}. It suffices to note
that by the Gauss-Bonnet-Chern Theorem, the Pfaffian form has the right
structure to apply Theorem 3.2 from \cite{MR2745198}, and hence one can pass to
the limit $\e\to 0$ on the left hand side. 
\end{remark}
\begin{proof}[Proof of Theorem \ref{thm:cov}]
Let $\varphi_k\in C^1(S^n\setminus\nu(\partial U))$ be a sequence that is bounded
uniformly in $L^\infty$ and converges
pointwise to $\varphi$.
It is sufficient to prove the claim for $\varphi_k$, and then apply the
dominated convergence theorem to obtain it for $\varphi$. Hence, from now on, we
may assume $\varphi\in C^1(S^n\setminus \nu(\partial U))$.

We set
\[
A_r:=\{x\in U:\varphi\circ\nu(x)>r\}\,.
\]
Note that $\varphi\circ \nu\in
C^{0,\alpha}(S^n)$, and hence by Lemma \ref{lem:hoelderpreimboxdim} we have

\begin{equation}
\bd\partial A_r=\bd (\varphi\circ\nu)^{-1}(r)\leq n-\alpha \quad \text{ for a.e. }r\in \R\,.\label{eq:12}
\end{equation}
Denoting the characteristic function of $A_r$ by $\chi_{A_r}$, we have for every $x\in U$, 
\[
\varphi\circ\nu(x)=\int_0^\infty 
\chi_{A_r}(x)\d r-\int_{-\infty}^0(1-\chi_{A_r}(x))\d r\,.
\]
By Fubini's Theorem,  we get
\[
\int_{U}\varphi\circ\nu \Pf
=\int_0^\infty\int_{A_r}\Pf \d r-\int_{-\infty}^0\int_{U\setminus A_r}\Pf\d r\,.
\]
Note that 
by \eqref{eq:12} and the assumption $\alpha>n/(n+1)$, we have $n\alpha>\bd\partial A_r$ for almost every $r\in \R$, and hence by
Proposition \ref{prop:notest} , we obtain
\begin{equation}
\begin{split}
  \int_{U}\varphi\circ\nu \Pf=&
  \int_0^\infty\int_{A_r}\Pf \d r-\int_{-\infty}^0\int_{U\setminus A_r}\Pf\d r\\
  =&\int_0^\infty \int_{S^n}\deg(\nu,A_r,z)\d\H^n(z) \d r\\
  &-\int_{-\infty}^0\int_{S^n}\deg(\nu,U \setminus A_r,z)\d\H^n(z)\d r\,.
\end{split}\label{eq:13}
\end{equation}
Now let $\tilde A_r:=\{z\in S^n:\varphi(z)>r\}$. Obviously, $A_r=\nu^{-1}(\tilde
A_r)$, and hence for every $z\in S^n\setminus \nu(\partial A_r)$,

\begin{equation}
\begin{split}
  \deg(\nu,A_r,z)=&\chi_{\tilde A_r}(z)\deg(\nu,U,z)\,,\\
  \deg(\nu,U\setminus A_r,z)=&(1-\chi_{\tilde A_r}(z))\deg(\nu,U,z)\,.
\end{split}\label{eq:14}
\end{equation}
Finally, for every $z\in S^n$, we have
\begin{equation}
\varphi(z)=\int_0^\infty 
\chi_{\tilde A_r}(z)\d r-\int_{-\infty}^0(1-\tilde \chi_{A_r}(z))\d r\,.\label{eq:15}
\end{equation}
Combining \eqref{eq:13}, \eqref{eq:14},  \eqref{eq:15} and   Fubini's Theorem, we obtain
\[
\int_{U}\varphi\circ\nu \Pf=\int_{S^{n}}\varphi(z)\deg(\nu,U,z)\d\H^n(z)\,.
\]
This proves the theorem.
\end{proof}
\begin{proof}[Proof of Theorem \ref{thm:boundcurv}]
The proof works as in \cite{conti2012h}.
We claim that for all $V\subset M$ open with smooth boundary, we have that
\begin{equation} \label{eq:claimproofbdextcurv}
\deg(\nu,V,\cdot) \geq \chi_{\nu(V) \setminus \nu(\partial V)}
\end{equation}

Without loss of generality, we may assume that $V$ is diffeomorphic to an open subset of $\R^{n}$. 
If not, cover $V$ by finitely many open sets $V_{1}, \dots V_{r}$ with smooth boundary that are 
diffeomorphic to an open subset of $\R^{n}$. Set $\tilde V_{i} = V \cap (V_{i}
\setminus \cup_{j < i} V_{j})^{\circ}$, where we have used the notation
$A^\circ$ to denote the interior of a set $A\subset M$. 
Using additivity of the mapping degree we obtain for $z \not \in \nu(\overline{V} \setminus \cup_{i=1}^{r} \tilde V_{i})$,
\[
\begin{split}
\deg(\nu,V,z) =& \deg(\nu,  \cup_{i=1}^{r} \tilde V_{i},z) = \sum_{i=1}^{r} \deg(\nu,\tilde V_{i},z) \\
\geq &\sum_{i=1}^{r} \chi_{\nu(\tilde V_{i}) \setminus \nu(\partial \tilde
  V_{i})}(z)\\
 \geq &\chi_{\nu(V) \setminus \nu(\partial V)}(z)
\end{split}
\] 

But $\H^{n}(\nu(\partial \tilde V_{i})) = 0$ for $i=1,\dots,r$, and since $\deg(\nu,V,\cdot)$ is locally constant 
we obtain the inequality for all $z \in S^{n} \setminus \nu(\partial V)$. \\
\\	
By definition, we have $\deg(\nu,V,z) = 0$ if $z \not \in \nu(V)$. 
For the sake of contradiction, assume that there is $z_{0} \in \nu(V)$ such that $\deg(\nu,V,z_{0}) \leq 0$.
We consider a small disk $D$ around $z_{0}$ with
\[
D \cap \nu(\partial V) = \varnothing
\]
and set $W = \nu^{-1}(D)$. 
Note that $\nu(W) \subset D$ and by continuity of $\nu$, $\nu(\partial W)
\subset \partial D$. Hence,  $\deg(\nu, W, z) = 0$ for $z \in S^{n} \setminus \overline{D}$ and
$\deg(\nu,W,z)=k$ for $z \in D$, where $k$ is some  integer.\\
\\
Let $\varphi\in C^1(S^n)$ with $\varphi\geq 0$,
$\varphi(z_0)>0$ and $\supp \varphi \subset D$. By Theorem \ref{thm:cov} we have
\begin{equation}
\int_{S^{n}}\varphi(z) \deg (\nu,W,z) \d z = \int_{W}\varphi\circ \nu \Pf > 0 \,.\label{eq:19}
\end{equation}
This implies that $k>0$.
By additivity of the degree we have
\[
0 < \deg(\nu,W, z_{0}) = \deg(\nu,V, z_{0}) - \deg(\nu,V\setminus \overline{W}, z_{0}) = \deg(\nu,V, z_{0}) \leq 0 
\]
since, by construction $z_{0} \not \in \nu(V\setminus \overline{W})$. 
But this is a contradiction. \\
\\
Now let $F_{1}, \dots, F_{r} \subset M$ be closed and pairwise disjoint.
We can cover them with disjoint open sets $V_{1}, \dots, V_{r} \subset M$ with smooth boundary 
and use (\ref{eq:claimproofbdextcurv}) and Proposition \ref{prop:notest} to obtain
\[
\begin{split}
\sum_{i=1}^{r} \H^{n}(\nu(F_{i})\setminus \nu(\partial V_{i})) \leq &\sum_{i=1}^{r} \H^{n}(\nu(V_{i})\setminus \nu(\partial V_{i})) \\
\leq& \sum_{i=1}^{r} \int_{S^{n}} \deg(\nu,V_{i},z) \d z \\
= &\sum_{i=1}^{r} \int_{V_{i}} \Pf\\
 \leq &\int_{M} \Pf \,,
\end{split}
\]
which is finite. By our choice of the $V_i$, we have 
$\H^{n}(\nu(\partial V_{i}))=0$ for $i=1,\dots,r$, and hence the theorem is proved.
\end{proof}
\begin{remark}
  As is easily seen from the proof, we could have deduced Theorem \ref{thm:boundcurv} directly from Proposition
  \ref{prop:notest} (without using Theorem \ref{thm:cov}) by choosing
  $\varphi\equiv \chi_D$ in \eqref{eq:19}.
\end{remark}
\bibliographystyle{plain}
\bibliography{rigid}

\end{document}